\setlist{leftmargin=*}
\title{Continuity of the Shafer-Vovk-Ville Operator}
\author{Natan T'Joens \and Gert de Cooman \and Jasper De Bock}
\institute{Ghent University, ELIS, SYSTeMS\\\email{\{natan.tjoens,gert.decooman,jasper.debock\}@ugent.be}}
\newcommand{\reals}{\bbbr}
\newcommand{\extreals}{\overline{\bbbr}}
\newcommand{\posreals}{\reals_{> 0}}
\newcommand{\nnegreals}{\reals_{\geq 0}}
\newcommand{\nats}{\bbbn}
\newcommand{\natz}{\bbbn_{0}}
\newcommand{\indica}[1]{\mathbb{I}_{#1}}
\newcommand{\upprev}{\overline{\mathrm{E}}}
\newcommand{\upprevvovk}{\overline{\mathrm{E}}_\mathrm{V}}
\newcommand{\lowprev}{\underline{\mathrm{E}}}
\newcommand{\lowprevvovk}{\underline{\mathrm{E}}_\mathrm{V}}
\newcommand{\lupprev}{\overline{\mathrm{Q}}}
\newcommand{\martingale}{\mathscr{M}}
\newcommand{\setofsupmart}{\overline{\mathbb{M}}}
\newcommand{\setofsupmartb}{\overline{\mathbb{M}}_\mathrm{b}}
\newcommand{\process}{\mathscr{L}}
\newcommand{\situations}{\mathscr{X}^\ast}
\newcommand{\statespace}{\mathscr{X}}
\newcommand{\statespaceseq}[2]{\mathscr{X}_{#1:#2}}
\newcommand{\samplespace}{\Omega}
\newcommand{\setofgambles}{\mathbb{G}}
\newcommand{\setofextvariables}{\overline{\mathbb{V}}}
\spnewtheorem{example}{Example}{\itshape}{\rmfamily}
\spnewtheorem{definition}{Definition}{\bfseries}{\rmfamily}
\begin{document}
\maketitle

\begin{abstract}
Kolmogorov's axiomatic framework is the best-known approach to describing probabilities and, due to its use of the Lebesgue integral, leads to remarkably strong continuity properties.
However, it relies on the specification of a probability measure on all measurable events.
The game-theoretic framework proposed by Shafer and Vovk does without this restriction.
They define global upper expectation operators using local betting options. 
We study the continuity properties of these more general operators. 
We prove that they are continuous with respect to upward convergence and show that this is not the case for downward convergence.
We also prove a version of Fatou's Lemma in this more general context.
Finally, we prove their continuity with respect to point-wise limits of two-sided cuts. 
\end{abstract}


\section{Introduction}
\label{sec:Introduction}
The most common approach to probability theory is the measure-theoretic framework that originates in Kolmogorov's work~\cite{kolmogorov1933grundbegriffe}. 
Its popularity is largely due to its interpretational neutrality and the elegant mathematical properties resulting from the use of measure theory.
However, this framework requires the definition of a probability measure on all measurable events.
Although this is often overlooked, it presents a major drawback, because the actual specification of these probabilities is far from trivial in many practical applications.
Hence, the mathematical results are elegant, but the underlying assumptions are very strong.

For dealing with stochastic processes, a more general and intuitive approach was proposed by Shafer and Vovk~\cite{Shafer:2005wx}. 
Their so-called game-theoretic framework is based on the idea of a supermartingale: a specific way to gamble on the successive outcomes of the process.

In our present contribution, we study the continuity properties of the upper (and therefore also lower) expectation operators that appear in this framework.
Our main results are that they are continuous with respect to upward, but not downward, convergence of uniformly bounded below sequences, and continuous with respect to particular limits of two-sided cuts.
From our upward convergence result, we also derive a generalised version of Fatou's Lemma.

\iftoggle{arxiv}{
	In order to facilitate the reading, proofs and intermediate results are relegated to the appendix.
}
{
	Due to length constraints, proofs and intermediate results are relegated to the appendix of an extended online version of this paper, available on ArXiv~\cite{TJoens2018Continuity}
}

\section{Preliminaries}
We denote the set of all natural numbers, without $0$, by $\nats$, and let $\natz \coloneqq \nats \cup \{0\}$. 
The set of extended real numbers is denoted by $\extreals \coloneqq \reals \cup \{+ \infty, - \infty\}$.  
The set of positive real numbers is denoted by $\posreals$ and the set of non-negative real numbers by $\nnegreals$.

We consider sequences of uncertain states $X_1, X_2, ..., X_n , ...$ where the state $X_k$ at each discrete time $k \in \nats$ takes a value in some non-empty finite set $\statespace{}$, called the \emph{state space}.
We call any $x_{1:n} \coloneqq (x_1,...,x_n) \in \statespace{}_{1:n} \coloneqq \statespace{}^n$, for $n \in \natz$, a \emph{situation} and we denote the set of all situations by $\statespace{}^\ast \coloneqq \cup_{n \in \natz{}} \mathscr{X}_{1:n}$.
So any finite string of possible values for a sequence of consecutive states is called a situation.
In particular, the unique empty string $x_{1:0}$, denoted by $\Box$, is called the \emph{initial situation}: $\statespace{}_{1:0} \coloneqq \{\Box\}$.

An infinite sequence of state values $\omega$ is called a \emph{path} and the set of all paths is called the \emph{sample space} $\samplespace{} \coloneqq \statespace{}^\nats$.
For any path $\omega \in \samplespace$, the initial sequence that consists of its first $n$ state values is a situation in $\statespaceseq{1}{n}$ that is denoted by $\omega^n$. 
The $n$-th state is denoted by $\omega_n \in \statespace$.

\section{Game-Theoretic Upper Expectations}
\label{sec: Game-theoretic lower and upper expectations}
In order to deal with stochastic processes mathematically, we use variables. 
A global variable, or simply \emph{variable}, $f$ is a map on the set $\Omega$ of all paths.
An \emph{(extended) real variable} $f$ associates an (extended) real number $f(\omega)$ with any path $\omega$.
The set of all extended real variables is denoted by $\setofextvariables$. 
For any natural $k \leq \ell$, we use $X_{k:\ell}$ to denote the variable that, for every path $\omega$, returns the tuple $X_{k:\ell}(\omega)\coloneqq(\omega_k,...,\omega_\ell)$.
As such, the state $X_k = X_{k:k}$ at any discrete time $k$ can also be regarded as a variable.

A collection of paths $A \subseteq \samplespace$ is called an \emph{event}.
The \emph{indicator} $\indica{A}$ of an event $A$ is defined as the variable that assumes the value $1$ on $A$ and $0$ elsewhere.
With any situation $x_{1:n}$, we associate the \emph{cylinder event} $\Gamma(x_{1:n})\coloneqq \{\omega \in \samplespace\colon \omega^n = x_{1:n}\}$: the set of all paths $\omega \in \samplespace$ that go through the situation $x_{1:n}$.
For a given $n \in \natz$, we call a variable $f$ \emph{$n$-measurable} if it is constant on the cylinder events $\Gamma(x_{1:n})$ for all $x_{1:n} \in \statespaceseq{1}{n}$, that is, if we can write $f = \tilde{f} \circ X_{1:n} = \tilde{f}(X_{1:n})$ for some map $\tilde{f}$ on $\statespace{}^n$. 
We will then use the notation $f(x_{1:n})$ for its constant value $f(\omega)$ on all paths $\omega \in \Gamma(x_{1:n})$.

Variables are inherently uncertain objects and, therefore, we need a way to model this uncertainty.
We will do this by means of upper and lower expectations, which requires the introduction of gambles.
 
For any non-empty set $\mathscr{Y}$, we define a \emph{gamble} $f$ on $\mathscr{Y}$ as a bounded real map on $\mathscr{Y}$.
It is then typically interpreted as an uncertain reward $f(y)$ when the outcome of some `experiment', assuming values in $\mathscr{Y}$, is $y \in \mathscr{Y}$.
The set of all gambles on $\mathscr{Y}$ is denoted by $\mathbb{G}(\mathscr{Y})$.
In particular, a gamble on $\samplespace{}$ is a bounded real variable.
When $\mathscr{Y} = \statespace{}$, we call the gamble $f$ a \emph{local gamble}.

A coherent upper expectation $\upprev$ on the set $\setofgambles(\mathscr{Y})$ is defined as a real functional on $\setofgambles(\mathscr{Y})$ that satisfies the following \emph{coherence axioms}~\cite[2.6.1]{Walley:1991vk}:
\begin{enumerate}[ref={\upshape E}\arabic*,label={\upshape E}\arabic*.]
\item \label{coherence 1} 
$\upprev(f) \leq \sup f$ for all $f \in \setofgambles(\mathscr{Y})$;{}
\item \label{coherence 2} 
$\upprev(f+g) \leq \upprev(f) + \upprev(g)$ for all $f,g \in \setofgambles(\mathscr{Y})$;
\item \label{coherence 3}
$\upprev(\lambda f) = \lambda \upprev(f)$ for all $f \in \setofgambles(\mathscr{Y})$ and real $\lambda \geq 0$.
\end{enumerate}
$\upprev(f)$ can be interpreted as some subject's minimum selling price for the gamble $f\in \setofgambles(\mathscr{Y})$ on $\mathscr{Y}$. 
Alternatively, one can also consider the conjugate lower expectation, defined by $\lowprev(f)\coloneqq-\upprev(-f)$ for all $f\in\setofgambles(\mathscr{Y})$. 
It clearly suffices to focus on only one of the two functionals. 
We will work with upper expectations.

In an \emph{imprecise probability tree} we attach to each situation $x_{1:n} \in \situations$ a \emph{local} probability model characterised by a coherent upper expectation $\lupprev(\cdot \vert x_{1:n})$ on the set $\setofgambles(\statespace)$ of all \emph{local} gambles on the next state $X_{n+1}$.
These local probability models $\lupprev(\cdot \vert x_{1:n})$ are usually known, as, in most practical cases, they can be elicited fairly easily from a subject or learned from data.
They express a subject's beliefs or knowledge about the next possible state.
However, gathering information or eliciting beliefs about a variable that depends on multiple states or even entire paths is not that straightforward. 
Therefore, the question arises how we can extend the local probability models (on single states) towards global probability models (on entire paths).

To answer this question, we first need to introduce the concepts of a process and a gamble process.
A \emph{process} $\mathscr{L}$ is a map defined on $\situations$. 
A real process associates a real number $\mathscr{L}(s) \in \reals$ with any situation $s \in \situations$. 
A real process is called positive (non-negative) if it is positive (non-negative) in every situation.
With any real process $\mathscr{L}$ we associate a sequence of $n$-measurable gambles $\{\mathscr{L}_n\}_{n \in \natz{}}$: for all $n \in \natz$, we let $\mathscr{L}_n(\omega) \coloneqq \mathscr{L}(\omega^n)$ for all $\omega \in \samplespace$ or, equivalently, $\mathscr{L}_n \coloneqq \mathscr{L} \circ X_{1:n} = \mathscr{L}(X_{1:n})$.
A \emph{gamble process} $\mathscr{D}$ is a process that associates with any situation $x_{1:n} \in \situations{}$ a local gamble $\mathscr{D}(x_{1:n}) \in \setofgambles{}(\statespace{})$. 
With any real process $\mathscr{L}$, we can associate a gamble process $\Delta \mathscr{L}$, called its \emph{process difference}. 
For any situation $x_{1:n}$ the corresponding gamble $\Delta \mathscr{L}(x_{1:n}) \in \setofgambles(\statespace)$ is defined by
\begin{equation*}
\Delta \mathscr{L}(x_{1:n})(x_{n+1})\coloneqq\mathscr{L}(x_{1:n+1}) - \mathscr{L}(x_{1:n}) \text{ for all } x_{n+1} \in \statespace.
\end{equation*}

\noindent
We will also use the extended real variables $\liminf \mathscr{L} \in \setofextvariables{}$ and $\limsup \mathscr{L} \in \setofextvariables{}$, defined by:
\begin{align*}
\liminf \mathscr{L}(\omega)\coloneqq\liminf_{n \to +\infty} \mathscr{L}_n(\omega)
\text{~~and~~}
\limsup \mathscr{L}(\omega)\coloneqq\limsup_{n \to +\infty} \mathscr{L}_n(\omega) 
\end{align*}
for all $\omega \in \Omega$.
If $\liminf \mathscr{L} = \limsup \mathscr{L}$, we denote their common value by $\lim \mathscr{L}$.

For a \emph{given} imprecise probability tree, a \emph{supermartingale} $\martingale$ is a \emph{real} process for which the process difference $\Delta \martingale$ has a non-positive local upper expectation everywhere: $\lupprev(\Delta \martingale(x_{1:n}) \vert x_{1:n}) \leq 0$ for all $x_{1:n} \in \situations$. 
In other words, a supermartingale is a process that, according to the local probability models, is expected to decrease.
The concept originates in the following `game-theoretic' argument.
Suppose that a forecaster sets minimum selling prices for every gamble $f$ on the next state $X_{n+1}$, i.e. he defines $\lupprev(f \vert x_{1:n})$. 
Non-positive minimum selling prices imply that he is willing to give away these gambles. Suppose now that you take him up on his commitments. 
The gambles available to you are then exactly the ones with $\lupprev(f \vert x_{1:n}) \leq 0$. 
Choosing such an available gamble in every situation $x_{1:n} \in \situations$ essentially defines a supermartingale. 
In this way, we can interpret a supermartingale as a strategy for gambling against a forecaster. 

We define supermartingales here as \emph{real} processes, whereas Shafer and Vovk define them as extended real processes~\cite{shafer2011levy}.
For any situation $s \in \situations{}$, such an extended real process allows the possibility for $\Delta \martingale{}(s)$ to be an extended real function on $\statespace{}$. 
However, it is not immediately obvious to us how to give a behavioural meaning to such extended real process differences, and we therefore prefer to define supermartingales as real processes whose differences are gambles.

We denote the set of all supermartingales \emph{for a given imprecise probability tree} by $\setofsupmart$. 
The set of all bounded below supermartingales is denoted by $\setofsupmartb$. 

We are now ready to introduce the game-theoretic upper expectation.
\begin{definition}\label{def:upperexpectation}
The upper expectation $\upprevvovk(\cdot \vert \cdot)$ 
is defined by
\begin{align}\label{upprev2}
\upprevvovk (f \vert s)\coloneqq \inf \big\{ \martingale(s) \colon \martingale \in \setofsupmartb \text{ and } (\forall \omega \in \Gamma(s)) \liminf \martingale(\omega) \geq f(\omega) \big\},
\end{align}
for all extended real variables $f \in \setofextvariables$ and all $s \in \situations$.
\end{definition}
This definition can be interpreted in the following way: the upper expectation of a variable $f$ when in a situation $s$, is the infimum starting capital in the situation $s$ such that, by using the available gambles from $s$ onwards, we are able to end up with a capital that dominates $f$, \emph{no matter the path through $s$ taken by the process}. 
Importantly, these upper expectations for global variables are defined in terms of supermartingales, and therefore, derived directly from the local models. 
Moreover, due to~\cite[Prop. 8.8]{Shafer:2005wx}, for every situation $s$, the restriction of $\upprevvovk(\cdot\vert s)$ to $\setofgambles{}(\samplespace{})$ satisfies the coherence axioms \ref{coherence 1}--\ref{coherence 3}. 

Observe that in defining these global upper expectations, we consider supermartingales that are bounded below, because as is shown in~\cite[Example~1]{DECOOMAN201618}, for extended real variables, the use of unbounded supermartingales leads to undesirable results, whereas Definition~\ref{def:upperexpectation} does not.

In the remainder of this contribution, we restrict our attention to upper expectations conditional on the initial situation $\Box$ and use the notation $\upprevvovk(f) \coloneqq \upprevvovk(f \vert \Box)$. 
This facilitates the reading and makes the paper conceptually easier.
That being said, we stress that all our arguments are easily extendible to upper expectations conditional on a general situation $s \in \situations{}$.

\section{Continuity with Respect to Upward Convergence}
\label{sec: Continuity properties}
The relevance of continuity properties for (upper) expectation functionals is evident. 
Not only do they provide the mathematical theory with elegance, they also enhance its practical scope. 
The continuity of the Lebesgue integral, for instance, is one of the reasons why it is the integral of choice for computing expected values associated with a probability measure.
Continuity properties provide constructive ways to calculate expectations that otherwise would be difficult or even impossible to calculate numerically.
For example, calculating the upper expectation $\upprevvovk(f)$ of an extended real variable $f$ directly is typically practically impossible if it depends on an infinite number of states. 
However, if we can find a sequence of simpler functions $\{f_n\}_{n \in \natz}$ that converges in some way to $f$, such that the upper expectation $\upprevvovk$ is continuous with respect to this convergence, then we can easily approximate $\upprevvovk(f)$ by $\upprevvovk(f_n)$, provided $n$ is large enough. 
If we can find a sequence for which moreover the individual $\upprevvovk(f_n)$ can be calculated directly, we obtain a practical method for calculating $\upprevvovk(f)$.
Unfortunately, it appears little is known at present about the continuity properties of the functional $\upprevvovk$; we aim to remedy this situation here.

It is well-known that every coherent upper expectation $\upprev$ is continuous with respect to uniform convergence~\cite[p.63]{troffaes2014}: if a sequence of gambles $\{f_n\}_{n \in \natz}$ converges uniformly to a gamble $f$, meaning that $\lim_{n \to +\infty} \sup \{\vert f - f_n \vert\} = 0$, then $\lim_{n \to +\infty} \upprev(f_n) = \upprev(f)$. 
Hence, since the restriction of $\upprevvovk$ to $\mathbb{G}(\samplespace{})$ is a coherent upper expectation, it is continuous with respect to the uniform convergence of gambles on $\samplespace{}$.
This type of continuity is however fairly weak, because the condition of uniform convergence is a very strong one.
Moreover, continuity with respect to pointwise convergence is not directly implied by mere coherence~\cite[p.63]{troffaes2014}.
The following example demonstrates that, also for the upper expectation operator $\upprevvovk$ we are focussing on here, continuity with respect to pointwise convergence, and downward convergence in particular, may fail. 

\begin{example}
Consider, in each situation $x_{1:n} \in \situations$, a completely vacuous model: $\lupprev(h \vert x_{1:n}) = \max h$ for all local gambles $h \in \setofgambles(\statespace)$ on the next state. 
Then it can be checked easily that $\upprevvovk(f) = \sup f$ for all $f \in \setofextvariables$.
Now let $\statespace \coloneqq \{0,1\}$, and consider the decreasing sequence of events $A_n$, defined by $A_n \coloneqq \{\omega \in \samplespace \colon \omega_i = 1 \text{ for all } 1 \leq i \leq n \} \setminus \{(1,1,1,...)\}$. 
Then $\lim_{n \to +\infty} \indica{A_n} = 0$ pointwise. 
However, as $\upprevvovk(\indica{A_n}) = 1$ for all $n \in \natz$, we have that $\lim_{n \to +\infty} \upprevvovk(\indica{A_n}) = 1$, whereas $\upprevvovk(\lim_{n \to +\infty} \indica{A_n})=\upprevvovk(0) = 0$, so $\upprevvovk$ is not continuous with respect to downward pointwise convergence of gambles.\hfill$\lozenge$
\end{example}
\noindent
This leads us to the conclusion that, in general, $\upprevvovk$ is not continuous with respect to downward---and therefore also pointwise---convergence.
Nevertheless, using a version of L\'evy's zero-one law\iftoggle{arxiv}{}{ \cite{TJoens2018Continuity}}, we can show that $\upprevvovk$ is continuous with respect to upward convergence of extended real variables that are uniformly bounded below, provided that the upper expectation of the limit variable $f$ is finite. 

\begin{theorem}[Upward Convergence Theorem]\label{theorem: upward convergence game}
Consider any non-decreasing sequence of extended real variables $\{f_n\}_{n \in \natz}$ that is uniformly bounded below---i.e. there is an $M \in \reals{}$ such that $f_n \geq M$ for all $n \in \natz{}$---and any extended real variable $f \in \setofextvariables$ such that $\lim_{n \to +\infty} f_n = f$ pointwise. 
If moreover $\upprevvovk(f) < +\infty$, then
\begin{equation*}
\upprevvovk(f) = \lim_{n \to +\infty} \upprevvovk(f_n).
\end{equation*} 
\end{theorem}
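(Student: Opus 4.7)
The plan is to prove the two inequalities separately. For the direction $\lim_n \upprevvovk(f_n) \leq \upprevvovk(f)$, it suffices to observe that $\upprevvovk$ is monotone on extended real variables: whenever $g \leq h$, any $\martingale \in \setofsupmartb$ with $\liminf \martingale \geq h$ also satisfies $\liminf \martingale \geq g$, so $\upprevvovk(g) \leq \upprevvovk(h)$ follows directly from Definition~\ref{def:upperexpectation}. Since $f_n \leq f_{n+1} \leq f$, the sequence $\{\upprevvovk(f_n)\}$ is non-decreasing and bounded above by $\upprevvovk(f)$, so $L \coloneqq \lim_n \upprevvovk(f_n)$ exists and lies in $[M, \upprevvovk(f)]$.

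For the reverse inequality, I would first reduce to $M = 0$ using the translation invariance $\upprevvovk(\cdot + c) = \upprevvovk(\cdot) + c$, which follows from Definition~\ref{def:upperexpectation} because adding a real constant to a supermartingale preserves both the supermartingale property and the bounded-below condition. If $L = +\infty$ there is nothing to prove, so assume $L < +\infty$, fix $\epsilon > 0$, and for each $n$ pick a witness $\martingale^n \in \setofsupmartb$ with $\martingale^n(\Box) \leq \upprevvovk(f_n) + \epsilon 2^{-n-1}$ and $\liminf \martingale^n \geq f_n$ pointwise. The goal is then to glue the family $\{\martingale^n\}$ into a single supermartingale $\martingale \in \setofsupmartb$ satisfying $\martingale(\Box) \leq L + \epsilon$ and $\liminf \martingale \geq f$ everywhere; the definition of $\upprevvovk(f)$ will then yield $\upprevvovk(f) \leq L + \epsilon$, and letting $\epsilon \downarrow 0$ closes the argument. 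This is where the game-theoretic version of L\'evy's zero-one law is invoked: it provides the pathwise control needed to justify a switching strategy in which, along each path, the combined supermartingale eventually tracks $\martingale^n$ for larger and larger $n$, so that its liminf inherits the lower bound $f = \lim_n f_n$.

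I expect the gluing construction to be the main obstacle. A naive sum $\sum_n \martingale^n$ fails because its initial value can exceed $L + \epsilon$, and an infinite sum need not even be real-valued or bounded below as required for membership in $\setofsupmartb$. A telescoping approach based on the non-negative increments $h_n \coloneqq f_n - f_{n-1}$ is also obstructed: subadditivity of $\upprevvovk$ yields only $\sum_{k=1}^n \upprevvovk(h_k) \geq \upprevvovk(f_n)$, i.e., $\sum_n \upprevvovk(h_n) \geq L$, which is the wrong direction for controlling $\sum_n \martingale^n(\Box)$ by $L + \epsilon$. L\'evy's zero-one law circumvents these issues by supplying pathwise convergence information about the conditional upper expectations $\upprevvovk(f \vert X_{1:n})$ rather than expectation-level bounds, and it is this pathwise information that allows the witness supermartingales to be spliced together while simultaneously preserving the initial-capital bound and the local supermartingale condition $\lupprev(\Delta \martingale(x_{1:k}) \vert x_{1:k}) \leq 0$ in every situation.
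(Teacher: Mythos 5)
Your easy direction and your diagnosis of why the naive sum and the telescoping decomposition fail are both correct, but the proof stops exactly where the real work begins: the ``gluing construction'' is announced as a goal and never carried out, and the route you gesture at is not the one that works. The paper's proof does not splice the witness supermartingales $\martingale^n$ at all --- they never appear. The missing idea is to take as the dominating supermartingale the process of conditional upper expectations itself: set $S_n(s)\coloneqq\upprevvovk(f_n\vert s)$ and $S(s)\coloneqq\lim_{n\to+\infty}S_n(s)$. Each $S_n$ is a supermartingale by the law of iterated upper expectations (Theorem~\ref{theoremIteratedExpectations} together with Lemma~\ref{lemma2}), $S$ is again a supermartingale because $\Delta S_n(s)$ converges uniformly to $\Delta S(s)$ on the finite set $\statespace$, and --- crucially --- $S(\Box)=\lim_{n\to+\infty}\upprevvovk(f_n)=L$ exactly, so no $\epsilon$-bookkeeping on the initial capital is needed. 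L\'evy's zero-one law (Theorem~\ref{Bounded Levy}) is then used not to ``justify a switching strategy'' but to produce, for each $n$, a positive test supermartingale diverging on the event where $\liminf_{m\to+\infty}\upprevvovk(f_n\vert\omega^m)<f_n(\omega)$; a countable convex combination of these (Lemma~\ref{lemma: Convex sum}) shows that $\liminf S\geq f$ holds only \emph{strictly almost surely}, and one then needs the separate fact (Lemma~\ref{lemmaMonotoneConvergenceSAS}) that the infimum in Definition~\ref{def:upperexpectation} is unchanged when the domination requirement is weakened to s.a.s. Your plan insists on $\liminf\martingale\geq f$ everywhere; without that lemma (or the trick inside it of adding $\epsilon$ times the divergent test supermartingale) you cannot get there.

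Two further gaps. First, Theorem~\ref{Bounded Levy} applies only to \emph{gambles}, so the argument must first be run for a non-decreasing sequence of gambles (Proposition~\ref{proposition: bounded upward convergence game}) and the general case recovered by truncating, $f_n^\ast\coloneqq\min\{f_n,n\}$, which still converges upward to $f$ and satisfies $\upprevvovk(f_n^\ast)\leq\upprevvovk(f_n)$; your proposal never performs this reduction, and your witnesses $\martingale^n$ are chosen for the possibly unbounded $f_n$ directly. Second, the hypothesis $\upprevvovk(f)<+\infty$ is what guarantees that $S$ and the $S_n$ are real-valued processes (via Lemma~\ref{lemmaMonotoneConvergence}), a point your sketch does not address. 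In short: the skeleton and the choice of key tool are right, but the central construction is absent, and the construction you hint at is not the one the argument actually rests on.
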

\noindent
The initial idea behind the proof is due to Shafer and Vovk, who proved continuity with respect to non-decreasing sequences of indicator gambles~\cite[Theorem 6.6]{Augustin:2014di}.
We have adapted it here to our working with real supermartingales and moreover generalised it to extended real variables.

The following example illustrates the practical relevance of this theorem.

\begin{example}\label{example: upward convergence}
In queuing theory or failure estimation, we are often interested in the time until some event happens and, in particular, in the lower and upper expectation of this time.
As we will illustrate here, Theorem \ref{theorem: upward convergence game} provides a method to approximate such upper expectations. 
The lower expectations can also be approximated, using Theorem~\ref{Lemma Continuity w.r.t. lower cuts} further on; see Example~\ref{example: continuity w.r.t. cuts}.

Consider the simple case where $\statespace{} \coloneqq \{0,1\}$. 
Suppose we are interested in the expected time until the first `$1$' appears. 
In other words, we are interested in the variable $f$ that returns the number of initial successive `$0$'s in a path:
\begin{equation*}
f(\omega) \coloneqq \inf{\big\{k\in\nats\colon \omega_k=1 \big\}}
\text{ for all $\omega \in \samplespace{}$,}
\end{equation*}
where for $\omega = (0,0,0,...)$, $f(\omega) = \inf \emptyset \coloneqq +\infty$.
It is typically infeasible to calculate the upper expectation of this variable directly because it depends on entire paths.
We can remedy this by considering instead, for every $n\in\natz$, the gamble $f_n$, defined by
\begin{equation*}
f_n(\omega) \coloneqq \min{\{f(\omega), n \}}
\text{ for all $\omega\in\Omega$.}
\end{equation*}
For every $n\in\natz$, $f_n$ is clearly $n$-measurable: it only depends on the value of the first $n$ states. Furthermore, $\{f_n\}_{n\in\natz}$ is bounded below by zero, non-decreasing and converges pointwise to $f$. 
Provided that $\upprevvovk(f) < +\infty$, Theorem~\ref{theorem: upward convergence game} therefore implies that $\upprevvovk(f)= \lim_{n \to+\infty} \upprevvovk(f_n)$. 
This allows us to approximate $\upprevvovk(f)$ by $\upprevvovk(f_n)$, for $n$ sufficiently large. 
Since the $n$-measurability of $f_n$ will typically make the computation of $\upprevvovk(f_n)$ feasible, we obtain a practical method for computing $\upprevvovk(f)$. \hfill$\lozenge$
\end{example}
\noindent
As a direct consequence of our Upward Convergence Theorem, we also obtain the following inequality. 

\begin{theorem}[Fatou's Lemma]\label{Fatou}
Consider a sequence of extended real variables $\{f_n\}_{n \in \natz}$ that is uniformly bounded below and let $f \coloneqq \liminf_{n \to +\infty} f_n$. 
If\/ $\upprevvovk(f) < +\infty$, then
\begin{align*}
\upprevvovk(f) \leq \liminf_{n \to +\infty} \upprevvovk(f_n).
\end{align*}
\end{theorem}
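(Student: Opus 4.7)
The plan is to derive Fatou's Lemma from the Upward Convergence Theorem in the standard way, by considering the pointwise infima $g_n \coloneqq \inf_{k \geq n} f_k$ for each $n \in \natz$. Each $g_n$ is an extended real variable, and because $f_k \geq M$ for all $k$, we also have $g_n \geq M$, so the sequence $\{g_n\}_{n \in \natz}$ is uniformly bounded below by the same constant $M$. By construction the sequence is non-decreasing, and its pointwise limit is precisely $\liminf_{n \to +\infty} f_n = f$.

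Next I would invoke monotonicity of $\upprevvovk$ on $\setofextvariables$: if $g \leq h$ pointwise, then any $\martingale \in \setofsupmartb$ whose $\liminf$ dominates $h$ also dominates $g$, so the infimum in Definition~\ref{def:upperexpectation} is taken over a larger set and $\upprevvovk(g) \leq \upprevvovk(h)$. Since $g_n \leq f_k$ for every $k \geq n$, this yields
\begin{equation*}
\upprevvovk(g_n) \leq \inf_{k \geq n} \upprevvovk(f_k) \quad \text{for every } n \in \natz.
\end{equation*}
Letting $n \to +\infty$ on the right-hand side gives $\liminf_{n \to +\infty} \upprevvovk(f_n)$, so in particular $\lim_{n \to +\infty} \upprevvovk(g_n) \leq \liminf_{n \to +\infty} \upprevvovk(f_n)$, the limit on the left existing because the sequence $\{\upprevvovk(g_n)\}_{n\in\natz}$ is non-decreasing (again by monotonicity).

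Finally, I would apply Theorem~\ref{theorem: upward convergence game} to the non-decreasing, uniformly bounded below sequence $\{g_n\}_{n\in\natz}$ with pointwise limit $f$. The hypothesis $\upprevvovk(f) < +\infty$ is exactly what is required to activate the theorem, and it gives $\upprevvovk(f) = \lim_{n \to +\infty} \upprevvovk(g_n)$. Combining this identity with the inequality from the previous step yields the desired bound $\upprevvovk(f) \leq \liminf_{n \to +\infty} \upprevvovk(f_n)$.

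The argument is essentially routine once the Upward Convergence Theorem is in hand; the only delicate point is verifying that the monotonicity property $g \leq h \Rightarrow \upprevvovk(g) \leq \upprevvovk(h)$ extends from gambles to arbitrary extended real variables, but this follows directly from Definition~\ref{def:upperexpectation} since the defining set of bounded below supermartingales shrinks as the target variable grows. No other technicality is expected.
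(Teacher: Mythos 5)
Your proposal is correct and follows essentially the same route as the paper: both define $g_n \coloneqq \inf_{k \geq n} f_k$, observe that this is a non-decreasing, uniformly bounded below sequence converging pointwise to $f$, apply Theorem~\ref{theorem: upward convergence game}, and finish by monotonicity of $\upprevvovk$ (the paper's property \ref{coherence 4}*, which indeed holds on all of $\setofextvariables$). No gaps.
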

\noindent
This result is similar to Fatou's Lemma in measure theory; hence its name. 
It provides an upper bound on the upper expectation of an extended real variable~$f$, in the form of a limit inferior of the upper expectations of any sequence of extended real variables $\{f_n\}_{n \in \natz}$ that is uniformly bounded below and whose limit inferior $\liminf_{n \to +\infty} f_n$ is equal to~$f$. 
Since this last condition is fairly weak, Theorem~\ref{Fatou} has wide applicability.  
In general, the inequality in the statement cannot be reversed, because we do not generally have continuity with respect to pointwise convergence.

\section{Continuity with Respect to Limits of Cuts} 
\label{sec: Continuity with respect to cuts}
Historically, the framework of imprecise probabilities as described by Walley~\cite{Walley:1991vk} has only considered gambles rather than unbounded or even extended real variables.
One important reason for this is that they allow us to use less involved mathematics.
Moreover, when considering unbounded or extended variables in practice, we are typically obliged to work with approximating gambles rather than the original variables.
Considering these arguments, restricting the functional $\upprevvovk$ to gambles would be very tempting, indeed.
However, most practically relevant variables in the context of stochastic processes are in fact unbounded and even extended real-valued; consider for instance hitting or stopping times, as in Examples~\ref{example: upward convergence} and~\ref{example: continuity w.r.t. cuts}.

In Theorem~\ref{theorem: upward convergence game} we have already shown how to approximate upper expectations for bounded below extended variables.
The following theorem allows us to approximate upper expectations for general extended real variables by using sequences of non-increasing lower cuts.

\begin{theorem}
\label{Lemma Continuity w.r.t. lower cuts}
Consider an extended real variable $f \in \setofextvariables$ and, for every $A \in \reals{}$, the variable $f_A$ defined by 
$f_A(\omega) \coloneqq \max\{f(\omega),A\} \text{ for all } \omega \in \samplespace$.
Then  
\begin{align*}
\lim_{A \to -\infty} \upprevvovk(f_A) = \upprevvovk(f).
\end{align*}
\end{theorem}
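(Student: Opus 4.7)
The plan is to establish the two inequalities $\lim_{A \to -\infty} \upprevvovk(f_A) \geq \upprevvovk(f)$ and $\lim_{A \to -\infty} \upprevvovk(f_A) \leq \upprevvovk(f)$ separately. The easy direction is the first: since $f_A \geq f$ pointwise for every $A \in \reals$, the monotonicity of $\upprevvovk$---which is immediate from Definition~\ref{def:upperexpectation}, because any supermartingale $\martingale \in \setofsupmartb$ with $\liminf \martingale \geq f_A$ also satisfies $\liminf \martingale \geq f$---gives $\upprevvovk(f_A) \geq \upprevvovk(f)$. In the same way, $A_1 \leq A_2$ implies $f_{A_1} \leq f_{A_2}$ and hence $\upprevvovk(f_{A_1}) \leq \upprevvovk(f_{A_2})$, so $A \mapsto \upprevvovk(f_A)$ is non-decreasing and the limit $\lim_{A \to -\infty} \upprevvovk(f_A)$ exists in $\extreals$ and dominates $\upprevvovk(f)$.

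For the reverse inequality, I would first dispose of the trivial case $\upprevvovk(f) = +\infty$, where equality holds vacuously. In the remaining case $\upprevvovk(f) < +\infty$, fix an arbitrary $\epsilon > 0$ and, using the defining infimum in Definition~\ref{def:upperexpectation}, pick a supermartingale $\martingale \in \setofsupmartb$ such that $\liminf \martingale(\omega) \geq f(\omega)$ for all $\omega \in \samplespace$ and $\martingale(\Box) \leq \upprevvovk(f) + \epsilon$. The key observation is that $\martingale$, being bounded below by some $M \in \reals$, already satisfies $\liminf \martingale \geq M$ on all of $\samplespace$. Hence, as soon as $A \leq M$,
\begin{equation*}
\liminf \martingale(\omega) \geq \max\{f(\omega), M\} \geq \max\{f(\omega), A\} = f_A(\omega)
\quad \text{for all } \omega \in \samplespace,
\end{equation*}
so the very same $\martingale$ is admissible in the infimum defining $\upprevvovk(f_A)$. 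Therefore $\upprevvovk(f_A) \leq \martingale(\Box) \leq \upprevvovk(f) + \epsilon$ for every $A \leq M$, and letting first $A \to -\infty$ and then $\epsilon \to 0$ gives $\lim_{A \to -\infty} \upprevvovk(f_A) \leq \upprevvovk(f)$.

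The proof is therefore short and does not require the heavier continuity machinery behind Theorem~\ref{theorem: upward convergence game}; the only real ``trick'' is realising that the boundedness-below built into the supermartingales of $\setofsupmartb$ automatically upgrades the domination condition $\liminf \martingale \geq f$ to $\liminf \martingale \geq f_A$ for all sufficiently small $A$. I expect this recognition---that one re-uses the same witnessing supermartingale instead of building a new one for each $A$---to be the main conceptual step; the treatment of the extended-real case $\upprevvovk(f) = +\infty$ and the routine verification that $\upprevvovk$ is monotone are the only other ingredients.
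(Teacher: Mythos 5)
Your proof is correct and follows essentially the same route as the paper's: the key observation---that the boundedness below of the witnessing supermartingale upgrades $\liminf \martingale \geq f$ to $\liminf \martingale \geq f_A$ for all sufficiently small $A$---is exactly the content of the paper's Lemma~\ref{Lemma: Lemma Bounded Below Gamble}, which you have simply inlined. The only cosmetic difference is that the paper quantifies over $\alpha > \upprevvovk(f)$ rather than over $\upprevvovk(f) + \epsilon$, which also covers the degenerate possibility $\upprevvovk(f) = -\infty$, where adding $\epsilon$ is not meaningful.
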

\noindent
Combining Theorems~\ref{theorem: upward convergence game} and~\ref{Lemma Continuity w.r.t. lower cuts}, we end up with the following result that allows us to move from upper expectations of gambles to upper expectations of general variables.
It also fits within the framework of Troffaes and De Cooman~\cite[Part 2]{troffaes2014}, which provides a general approach to extending coherent lower and upper expectations from gambles to real variables.

\begin{theorem}[Continuity with respect to cuts]\label{theorem Continuity w.r.t. cuts}
Consider any extended real variable $f \in \setofextvariables$ and, for any $A,B \in \reals$ such that $B \geq A$, the gamble $f_{(A,B)}$, defined by
\begin{equation*}\label{eq: cutted variable}
f_{(A,B)}(\omega) \coloneqq 
\begin{cases}
B \text{ if } f(\omega) > B; \\
f(\omega) \text{ if } B \geq f(\omega) \geq A; \\
A \text{ if } f(\omega) < A,
\end{cases}
\text{for all } \omega \in \samplespace.
\end{equation*}
If\/ $\upprevvovk(f) < +\infty$, then
\begin{equation*}
 \lim_{A \to -\infty} \lim_{B \to +\infty} \upprevvovk(f_{(A,B)}) = \upprevvovk(f).
\end{equation*}
\end{theorem}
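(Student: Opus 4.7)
The plan is to cascade the two previous continuity theorems: Theorem~\ref{theorem: upward convergence game} handles the inner limit $B \to +\infty$ for each fixed $A$, and Theorem~\ref{Lemma Continuity w.r.t. lower cuts} handles the outer limit $A \to -\infty$. The double cut $f_{(A,B)}$ is essentially designed to make such a cascade work: for fixed $A$, pushing $B$ up recovers the lower cut $f_A \coloneqq \max\{f, A\}$, and then letting $A$ descend recovers $f$.

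First I would verify the structural properties needed for the inner limit. For any fixed $A \in \reals$, the family $\{f_{(A,B)}\}_{B \geq A}$ is non-decreasing in $B$, uniformly bounded below by $A$, and converges pointwise to $f_A$ as $B \to +\infty$; the last point follows since on paths with $f(\omega) < +\infty$ we have $f_{(A,B)}(\omega) = f_A(\omega)$ as soon as $B \geq f(\omega)$, while on paths with $f(\omega) = +\infty$ we have $f_{(A,B)}(\omega) = B \to +\infty = f_A(\omega)$.

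The crucial step is to secure the finiteness hypothesis $\upprevvovk(f_A) < +\infty$ required by Theorem~\ref{theorem: upward convergence game}. Monotonicity of $\upprevvovk$ on extended real variables is immediate from Definition~\ref{def:upperexpectation}, since $g \leq h$ implies that every bounded below supermartingale dominating $h$ in liminf also dominates $g$. Applied to $A \leq A'$, this gives $\upprevvovk(f_A) \leq \upprevvovk(f_{A'})$, so $A \mapsto \upprevvovk(f_A)$ is non-decreasing. Theorem~\ref{Lemma Continuity w.r.t. lower cuts} asserts that this non-decreasing function tends to $\upprevvovk(f) < +\infty$ as $A \to -\infty$, so it must be finite for all $A$ below some threshold $A^\star$.

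For every such $A \leq A^\star$, Theorem~\ref{theorem: upward convergence game} then yields $\lim_{B \to +\infty} \upprevvovk(f_{(A,B)}) = \upprevvovk(f_A)$ (passing to an integer subsequence $B_n \to +\infty$ and invoking monotonicity of $B \mapsto \upprevvovk(f_{(A,B)})$ to convert between sequential and real limits). Taking $A \to -\infty$ and applying Theorem~\ref{Lemma Continuity w.r.t. lower cuts} a second time then gives
\begin{equation*}
\lim_{A \to -\infty}\lim_{B \to +\infty} \upprevvovk(f_{(A,B)}) = \lim_{A \to -\infty} \upprevvovk(f_A) = \upprevvovk(f).
\end{equation*}
The main obstacle I anticipate is precisely the finiteness check: a direct coherence-style bound on $\upprevvovk(f_A)$ is awkward because $f$ is extended real-valued and need not be bounded below, so the finiteness is best extracted as a free consequence of Theorem~\ref{Lemma Continuity w.r.t. lower cuts} combined with the monotonicity of $A \mapsto \upprevvovk(f_A)$, rather than by trying to dominate $f_A$ pointwise by $f$ plus some constant.
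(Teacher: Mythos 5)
Your proposal is correct and follows essentially the same route as the paper: for fixed $A$ the inner limit is handled by upward convergence applied to the non-decreasing sequence of gambles $f_{(A,B_n)} \nearrow f_A$, and the outer limit by Theorem~\ref{Lemma Continuity w.r.t. lower cuts}. The only (valid) deviation is how you secure $\upprevvovk(f_A)<+\infty$ for small $A$: you extract it from Theorem~\ref{Lemma Continuity w.r.t. lower cuts} together with the monotonicity of $A\mapsto\upprevvovk(f_A)$, whereas the paper obtains it directly from a bounded below supermartingale dominating $f$ (its Lemma~\ref{Lemma: Lemma Bounded Below Gamble}); both arguments are sound.
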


\begin{example}\label{example: continuity w.r.t. cuts}
Consider the same state space $\statespace{}$ and the same variables $f$ and $f_n$ as in Example \ref{example: upward convergence}.
We have already shown there how to approximate the upper expectation $\upprevvovk(f)$ of $f$ by $\upprevvovk(f_n)$.
Now, we want to approximate the lower expectation $\lowprevvovk$---defined by $\lowprevvovk(g)\coloneqq-\upprevvovk(-g)$ for all $g\in\setofextvariables$---of $f$.
As $\{ f_n \}_{n \in \nats{}}$ is an increasing sequence of upper cuts of $f$, $\{-  f_n \}_{n \in \nats{}}$ is a decreasing sequence of lower cuts of $-f$.
Hence, it follows from Theorem~\ref{Lemma Continuity w.r.t. lower cuts} that $\lim_{n \to +\infty} \upprevvovk{}(- f_n) = \upprevvovk{}(- f)$, and therefore, using conjugacy, $\lim_{n \to +\infty} - \lowprevvovk{}( f_n) = - \lowprevvovk{}(f)$, or, equivalently, $\lim_{n \to +\infty} \lowprevvovk{}( f_n) = \lowprevvovk{}(f)$.
Hence, in the same way as was described in Example~\ref{example: upward convergence}, we now also have a constructive method for approximating the lower expectation of the variable $f$. \hfill$\lozenge$
\end{example}

\section{Conclusion}
\label{sec: Conclusion}

Among the continuity properties derived in this paper, the continuity with respect to cuts is the more remarkable, as it allows us to limit ourselves, for the larger part, to the study of $\upprevvovk$ on gambles rather than the study of $\upprevvovk$ on extended real variables.
Although the functional $\upprevvovk$ is not continuous with respect to general downward convergence, it is thus continuous for a particular way of downward convergence: sequences of non-increasing lower cuts.
These results hold provided that the upper expectation $\upprevvovk(f)$ of the limit variable $f$ is finite. 
The case where $ \upprevvovk(f)  = + \infty$ is largely left unexplored.

There is also an interesting connection between the game-theoretic functional $\upprevvovk$ and the measure-theoretic Lebesgue integral when all local models are assumed precise.
It was already pointed out by Shafer and Vovk~\cite[Chapter 8]{Shafer:2005wx} that for indicator gambles $\indica{A}$ of events $A$ in the $\sigma$-algebra created by the cylinder events, $\upprevvovk(\indica{A})$ is equal to the Lebesgue integral of $\indica{A}$ when the global measure is defined according to the Ionescu-Tulcea Theorem~\cite[p.249]{shiryaev1995probability}.
Using our results, we aim to generalise the connection between both operators and study the extend to which they are equal. 
We leave this as future work.

Another topic of further research is the continuity of $\upprevvovk$ with respect to the pointwise convergence of $n$-measurable gambles.
We suspect that, in order to establish the behaviour of $\upprevvovk$ with respect to this particular type of convergence, it will pay to investigate the potentially strong link with the concept of natural extension~\cite{Walley:1991vk} and, as a consequence, the special status of $\upprevvovk$ with respect to other extending functionals.


\iftoggle{arxiv}{
\appendix
\section{Additional Preliminaries}

\subsection{Upper Expectations}\label{section: lowprev}
Consider any non-empty set $\mathscr{Y}$.
We have already defined a coherent upper expectation $\upprev{}$ as a functional on $\setofgambles{}(\mathscr{Y})$ that satisfies the coherence axioms \ref{coherence 1}--\ref{coherence 3}, and have introduced its conjugate lower expectation $\lowprev{}$. 
We will moreover need the following properties of these operators, which follow at once from~\cite[2.6.1]{Walley:1991vk}. 
For all gambles $f, g \in \setofgambles(\mathscr{Y})$, any sequence $\{f_n\}_{n \in \natz}$ of gambles in $\setofgambles{}(\mathscr{Y})$ and all real $\mu$,
\begin{enumerate}[ref={\upshape E}\arabic*,label={\upshape E}\arabic*.,resume]
\item \label{coherence 4}
$f \leq g \Rightarrow \upprev(f) \leq \upprev(g)$ and $\lowprev(f) \leq \lowprev(g)$;
\item \label{coherence 5}
$\inf f \leq \lowprev(f) \leq \upprev(f) \leq \sup f$;
\item \label{coherence 6}
$\upprev(f + \mu) = \upprev(f) + \mu$ and $\lowprev(f + \mu) = \lowprev(f) + \mu$;
\item \label{coherence 7}
$\lim_{n \to +\infty} \sup{\vert f - f_n \vert} = 0 \Rightarrow \lim_{n \to +\infty} \vert \upprev(f) - \upprev(f_n) \vert = 0$.
\end{enumerate}
It follows from property \ref{coherence 7} that any coherent upper expectation $\upprev{}$ is continuous with respect to uniform convergence.

As the restriction of $\upprevvovk(\cdot\vert s)$ to $\setofgambles{}(\samplespace{})$ is a coherent upper expectation on $\setofgambles{}(\samplespace{})$, it follows at once that $\upprevvovk(\cdot\vert s)$ also satisfies \ref{coherence 4}--\ref{coherence 7} on $\setofgambles{}(\samplespace{})$.
Moreover, by~\cite{DECOOMAN201618}, some of them continue to hold when we consider $\upprevvovk(\cdot\vert s)$ on the extended real variables $\setofextvariables{}$. 
In particular, \ref{coherence 4}--\ref{coherence 6} remain unchanged, and will be denoted by respectively \ref{coherence 4}*--\ref{coherence 6}* when we consider extended real variables.

\subsection{Cuts and processes}

We have already defined situations, paths, variables, processes.
We say that a path $\omega \in \samplespace{}$ \emph{goes through} a situation $s \in \situations{}$ when there is some $n \in \natz{}$ such that $\omega^n = s$.
We write that $s \sqsubseteq t$, and say that $s$ \emph{precedes} $t$ or that $t$ \emph{follows} $s$, when every path that goes through $t$ also goes through $s$. 
When $s \sqsubseteq t$ and $ s \not= t$, we write $s \sqsubset t$. 
When neither $s \sqsubseteq t$ nor $t \sqsubseteq s$, we say that $s$ and $t$ are \emph{incomparable}.

In our proofs, we will also need the concept of a cut.
A cut $U$ is collection of pairwise incomparable situations. 
For any two cuts $U$ and $V$, we can define the following sets of situations:
\begin{align*}
[U,V] \coloneqq \{s \in \situations : (\exists u \in U)(\exists v \in V)u \sqsubseteq s \sqsubseteq v \}, \\
[U,V) \coloneqq \{s \in \situations : (\exists u \in U)(\exists v \in V)u \sqsubseteq s \sqsubset v \}, \\
(U,V] \coloneqq \{s \in \situations : (\exists u \in U)(\exists v \in V)u \sqsubset s \sqsubseteq v \}, \\
(U,V) \coloneqq \{s \in \situations : (\exists u \in U)(\exists v \in V)u \sqsubset s \sqsubset v \}.
\end{align*}
We call a cut $U$ \emph{complete} if for all $\omega \in \Omega$ there is some $u \in U$ such that $\omega \in \Gamma(u)$. 
Otherwise, we call $U$ \emph{partial}. 
We will also use the simpler notation $s$ to denote the cut $\{s\}$ that consists of the single situation $s \in \situations$. 
In this way we can define $[U,s]$, $[U,s)$, $[s,V]$, ... in a similar way as above.
We also write $U \sqsubset V$ if $(\forall v \in V)(\exists u \in U) \ u \sqsubset v$.
Analogously as before, we say that a path $\omega \in \samplespace{}$ goes through a cut $U$ when there is some $n \in \natz{}$ such that $\omega^n \in U$.


Recall that a gamble process $\mathscr{D}$ is a process that associates with any situation~$s$ a gamble $\mathscr{D}(s) \in \setofgambles(\statespace)$ on $X_{n+1}$ and that, with any real process $\mathscr{L}$, we can associate a gamble process $\Delta \mathscr{L}$, called its process difference.
Conversely, with a gamble process $\mathscr{D}$, we can associate a real process $\Delta^{-1}\mathscr{D}$, called its \emph{summation process}, defined by
\begin{equation*}
\Delta^{-1}\mathscr{D}(x_{1:n}) \coloneqq \sum_{k=0}^{n-1} \mathscr{D}(x_{1:k})(x_{k+1}) \text{ for all } n \in \nats \text{ and } x_{1:n} \in \statespaceseq{1}{n},
\end{equation*}
where we also let $\Delta^{-1}\mathscr{D}(\Box) \coloneqq 0$.

With any real process $\mathscr{L}$ such that $\mathscr{L}(s) \not= 0$ for all $s \in \situations$, we can associate a gamble process $\mu \mathscr{L}$, called its \emph{process multiplier}, as follows. For every situation $x_{1:n}$ the gamble $\mu \mathscr{L}(x_{1:n}) \in \setofgambles(\statespace)$ is defined by
\begin{equation*}
\mu \mathscr{L}(x_{1:n})(x_{n+1}) \coloneqq \frac{\mathscr{L}(x_{1:n+1})}{\mathscr{L}(x_{1:n})} \text{ for all } x_{n+1} \in \statespace.
\end{equation*}
Conversely, with a gamble process $\mathscr{D}$, we can associate a real process $\mu^{-1}\mathscr{D}$, called its \emph{product process}, defined by
\begin{equation*}
\mu^{-1}\mathscr{D}(x_{1:n}) \coloneqq \prod_{k=0}^{n-1} \mathscr{D}(x_{1:k})(x_{k+1}) \text{ for all } n \in \nats \text{ and } x_{1:n} \in \statespaceseq{1}{n},
\end{equation*}
where we also let $\mu^{-1}\mathscr{D}(\Box) \coloneqq 1$.

We will also say that a supermartingale $\martingale{}$ is a \emph{test supermartingale} if it is non-negative and $\martingale(\Box) = 1$.

\section{Proofs of the results in Section \ref{sec: Continuity properties}}\label{section: Proofs Continuity}

\subsection{L\'evy's zero-one law}\label{subsec: Levy}

One of the key results needed for the proofs in this paper is a particular version of L\'evy's zero-one law suitable in our context of imprecise probability trees.
Takemura, Shafer and Vovk have already proved a version of this law in Reference~\cite{shafer2011levy}.
However, the use of real supermartingales in our Definition~\ref{def:upperexpectation} requires a slightly different approach.
The idea remains essentially the same, the technical details differ.

\begin{lemma}[\protect{\cite[Lemma 1] {DECOOMAN201618}}] \label{lemma1}
	Consider any supermartingale $\martingale \in \setofsupmart$ and any situation~$s \in \situations$. Then
	\begin{equation*}
		\martingale(s) \geq \inf_{\omega \in \Gamma(s)} \limsup \martingale(\omega) \geq \inf_{\omega \in \Gamma(s)} \liminf \martingale(\omega).
	\end{equation*}
\end{lemma}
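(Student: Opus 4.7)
The plan splits the two inequalities.

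For the second one, $\inf_{\omega \in \Gamma(s)} \limsup \martingale(\omega) \geq \inf_{\omega \in \Gamma(s)} \liminf \martingale(\omega)$, nothing needs to be said beyond the pointwise inequality $\limsup_n a_n \geq \liminf_n a_n$ for any numerical sequence and the monotonicity of $\inf$ in its argument.

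For the first inequality, $\martingale(s) \geq \inf_{\omega \in \Gamma(s)} \limsup \martingale(\omega)$, the idea is to exhibit a single concrete path $\omega^\star \in \Gamma(s)$ along which $\martingale$ is non-increasing beyond $s$; the desired bound then follows because the infimum on the right is at most $\limsup \martingale(\omega^\star) \leq \martingale(s)$. The mechanism for this greedy construction comes from the local supermartingale condition $\lupprev(\Delta\martingale(t)\vert t) \leq 0$. Combined with the coherence bound $\inf g \leq \lowprev(g) \leq \upprev(g)$ (a consequence of the axioms \ref{coherence 1}--\ref{coherence 3} via subadditivity applied to $g$ and $-g$), this forces $\inf_{y \in \statespace} \Delta\martingale(t)(y) \leq 0$; since $\statespace$ is finite, the infimum is attained, so there exists $y_t \in \statespace$ with $\martingale(t \cdot y_t) \leq \martingale(t)$.

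With that selector in hand, I would define $\omega^\star \in \Gamma(s)$ inductively: start with $\omega^{\star,|s|} = s$, and once $\omega^{\star,n}$ is defined for some $n \geq |s|$, extend it by appending the state $y_{\omega^{\star,n}}$. A straightforward induction on $n$ gives $\martingale(\omega^{\star,n}) \leq \martingale(s)$ for every $n \geq |s|$, hence $\limsup \martingale(\omega^\star) = \limsup_n \martingale(\omega^{\star,n}) \leq \martingale(s)$, which closes the argument. There is no real obstacle here; the only subtle point is justifying the existence of the greedy choice $y_t$ from the local coherence axioms, as sketched above.
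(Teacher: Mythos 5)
Your proof is correct, and it is essentially the argument behind the cited result: the paper itself imports Lemma~\ref{lemma1} from \cite[Lemma 1]{DECOOMAN201618} without reproving it, and the proof there is exactly this greedy construction — use $\inf \Delta\martingale(t) \leq \lupprev(\Delta\martingale(t)\vert t) \leq 0$ (property \ref{coherence 5}) together with the finiteness of $\statespace$ to pick a successor state where $\martingale$ does not increase, iterate to get a path $\omega^\star \in \Gamma(s)$ with $\martingale(\omega^{\star,n}) \leq \martingale(s)$ for all $n \geq |s|$, and conclude via $\limsup\martingale(\omega^\star) \leq \martingale(s)$. The second inequality is indeed immediate from $\limsup \geq \liminf$ pointwise.
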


\begin{lemma}\label{LemmaBoundedSupermartingale}
	Consider any bounded below supermartingale $\martingale \in \setofsupmartb$ and, for any $B \in \reals$, the real process $\martingale_B$ defined by
	\begin{equation*}
	\martingale_B(s) \coloneqq \min\{\martingale(s), B\} \text{ for all } s \in \situations.
	\end{equation*} 
	Then $\martingale_B$ is also a bounded below supermartingale.
\end{lemma}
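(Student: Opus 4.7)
The plan is to verify the two defining conditions of a bounded below supermartingale: (i) that $\martingale_B$ is a real process bounded below, and (ii) that its process difference has non-positive local upper expectation in every situation. Condition (i) is immediate: since $\martingale$ takes real values and $B\in\reals{}$, so does $\martingale_B$; and if $\martingale\geq M$ everywhere for some $M\in\reals{}$, then $\martingale_B\geq\min\{M,B\}$ everywhere.

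For condition (ii), fix a situation $x_{1:n}\in\situations{}$ and compute the difference
\begin{equation*}
\Delta\martingale_B(x_{1:n})(x_{n+1})=\min\{\martingale(x_{1:n+1}),B\}-\min\{\martingale(x_{1:n}),B\}.
\end{equation*}
I would distinguish two cases according to the value of $\martingale(x_{1:n})$. If $\martingale(x_{1:n})\leq B$, then $\min\{\martingale(x_{1:n}),B\}=\martingale(x_{1:n})$, and since $\min\{\martingale(x_{1:n+1}),B\}\leq\martingale(x_{1:n+1})$ pointwise in $x_{n+1}\in\statespace{}$, we obtain $\Delta\martingale_B(x_{1:n})\leq\Delta\martingale(x_{1:n})$ as a gamble on $\statespace{}$. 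Monotonicity \ref{coherence 4} of the local coherent upper expectation $\lupprev(\cdot\vert x_{1:n})$ then yields
\begin{equation*}
\lupprev\bigl(\Delta\martingale_B(x_{1:n})\,\big\vert\,x_{1:n}\bigr)\leq\lupprev\bigl(\Delta\martingale(x_{1:n})\,\big\vert\,x_{1:n}\bigr)\leq 0,
\end{equation*}
using the supermartingale property of $\martingale$ in the second inequality. If instead $\martingale(x_{1:n})>B$, then $\min\{\martingale(x_{1:n}),B\}=B$, and $\Delta\martingale_B(x_{1:n})(x_{n+1})=\min\{\martingale(x_{1:n+1}),B\}-B\leq 0$ for every $x_{n+1}\in\statespace{}$. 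Axiom \ref{coherence 1} then gives $\lupprev(\Delta\martingale_B(x_{1:n})\vert x_{1:n})\leq\sup\Delta\martingale_B(x_{1:n})\leq 0$.

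There is no real obstacle here; the only point to notice is that $\Delta\martingale_B(x_{1:n})$ is indeed a local gamble (a bounded real map on $\statespace{}$), which is automatic because $\statespace{}$ is finite and $\martingale_B$ is real-valued. Combining the two cases, $\martingale_B$ satisfies the supermartingale inequality in every situation, and together with the lower bound established above this gives $\martingale_B\in\setofsupmartb$.
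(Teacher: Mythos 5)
Your proof is correct and follows essentially the same route as the paper's: the same case split on whether $\martingale(x_{1:n})\leq B$ or $\martingale(x_{1:n})>B$, with monotonicity [\ref{coherence 4}] in the first case and axiom \ref{coherence 1} in the second. The extra remark that $\Delta\martingale_B(x_{1:n})$ is automatically a gamble because $\statespace{}$ is finite is a fine (implicit in the paper) observation.
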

\begin{proof}
	It is clear that, since $\martingale$ is a bounded below real process, $\martingale_B$ is also a bounded below real process. 
	Now fix any $x_{1:n} \in \situations$.
	We consider two cases. 
	If $\martingale(x_{1:n}) \leq B$, then
	\begin{multline*}
	\Delta \martingale_B(x_{1:n})(x_{n+1}) 
	= \min\{\martingale(x_{1:n+1}), B\} - \martingale(x_{1:n}) \\
	\leq \martingale(x_{1:n+1}) - \martingale(x_{1:n})
	= \Delta \martingale (x_{1:n})(x_{n+1}) \text{ for all $x_{n+1} \in \statespace$.}
	\end{multline*}
	It therefore follows that $\Delta \martingale_B(x_{1:n}) \leq \Delta \martingale(x_{1:n})$, and hence, by the monotonicity \ref{coherence 4} of $\lupprev{}(\cdot \vert x_{1:n})$, also that
	\begin{align*}
	\lupprev(\Delta \martingale_B(x_{1:n}) \vert x_{1:n}) \leq \lupprev(\Delta \martingale(x_{1:n}) \vert x_{1:n}) \leq 0.
	\end{align*}
	\noindent
	If $\martingale(x_{1:n}) > B$, then
	\begin{align*}
	\Delta \martingale_B(x_{1:n})(x_{n+1}) = \min\{\martingale(x_{1:n+1}), B\} - B \leq 0 \text{ for all } x_{n+1} \in \statespace.
	\end{align*}
	Hence, $\Delta \martingale_B(x_{1:n}) \leq 0$ and therefore, by \ref{coherence 1}, also $\lupprev(\Delta \martingale_B(x_{1:n}) \vert x_{1:n}) \leq 0$.

	We conclude that $\lupprev(\Delta \martingale_B(s) \vert s) \leq 0$ for all $s \in \situations$, so $\martingale_B$ is indeed a bounded below supermartingale.
	\qed
\end{proof}

\begin{lemma}\label{Lemma: the minimum - limit inferior switch}
	Consider any real process $\process{}$ and any path $\omega \in \samplespace{}$. Then 
	\begin{equation*}
	 \min \{B,\liminf_{n \to +\infty} \process(\omega^n)\} = \liminf_{n \to +\infty} \min\{B,\process(\omega^n)\} \text{ for all $B \in \reals$.}
	\end{equation*} 
\end{lemma}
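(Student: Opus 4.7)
The plan is to recognise this as an instance of a general real-analytic fact: for a non-decreasing continuous function $g : \extreals \to \extreals$, one has $g(\liminf a_n) = \liminf g(a_n)$ for any sequence $\{a_n\}$ of (extended) reals. Applied to $g(x) \coloneqq \min\{B,x\}$ and $a_n \coloneqq \process(\omega^n)$, this yields exactly the claimed identity.

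So first, I would set $a_n \coloneqq \process(\omega^n)$ and $L \coloneqq \liminf_{n \to +\infty} a_n \in \extreals$, and observe that the map $g \colon \extreals \to \extreals : x \mapsto \min\{B,x\}$ is non-decreasing and continuous (trivially verified, with the usual conventions $\min\{B,+\infty\} = B$ and $\min\{B,-\infty\} = -\infty$). The task is then to establish the two inequalities $g(L) \leq \liminf_{n} g(a_n)$ and $g(L) \geq \liminf_{n} g(a_n)$.

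For the first inequality, I would argue: fix any $M < g(L)$. By continuity and monotonicity of $g$, there exists some $L' < L$ in $\extreals$ with $g(L') > M$ (taking $L' = L - \delta$ for small $\delta > 0$ if $L \in \reals$, any $L' \in \reals$ large enough if $L = +\infty$, and noting the statement is vacuous when $L = -\infty$ since then $g(L) = -\infty$). The definition of $\liminf$ then guarantees $a_n \geq L'$ for all but finitely many $n$, which through monotonicity of $g$ gives $g(a_n) \geq g(L') > M$ eventually, hence $\liminf_n g(a_n) \geq M$. Letting $M \to g(L)$ yields the desired bound. For the reverse inequality, pick a subsequence $a_{n_k} \to L$ guaranteed by the definition of $\liminf$; continuity of $g$ then gives $g(a_{n_k}) \to g(L)$, so $\liminf_n g(a_n) \leq \lim_k g(a_{n_k}) = g(L)$.

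The only mildly delicate point is the careful handling of the extended-real cases $L = \pm \infty$, but since $g$ is continuous on all of $\extreals$, these reduce to routine bookkeeping rather than any substantive obstacle.
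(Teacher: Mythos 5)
Your proof is correct, but it takes a somewhat different route from the paper's. You abstract the statement to a general fact---that a non-decreasing, continuous map $g$ on $\extreals$ (here $g(x)=\min\{B,x\}$) commutes with $\liminf$---and establish the two inequalities directly: the bound $\liminf_n g(a_n)\leq g(\liminf_n a_n)$ via a subsequence converging to the limit inferior, and the converse via the fact that the sequence is eventually above every $L'<\liminf_n a_n$, combined with monotonicity and (left-)continuity of $g$. The paper instead dismisses the first inequality as immediate (it follows from $\min\{B,\process(\omega^n)\}\leq B$ and $\min\{B,\process(\omega^n)\}\leq \process(\omega^n)$ plus monotonicity of $\liminf$) and proves the second by contradiction, exploiting the identity $\inf_{n\geq m}\min\{B,\process(\omega^n)\}=\min\{B,\inf_{n\geq m}\process(\omega^n)\}$ over tails to contradict the definition of the supremum in $\liminf=\sup_m\inf_{n\geq m}$. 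Your version is more general---it would apply to any non-decreasing continuous transformation, not just clipping at $B$---at the cost of some extended-real bookkeeping, which you handle adequately; the paper's argument is more computational and tied to the specific $\min$ structure. Both are sound.
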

\begin{proof}
	Consider any $B \in \reals$.
	It is easy to check that 
	\begin{equation*}
	\min \{B,\liminf_{n \to +\infty} \process(\omega^n)\} \geq \liminf_{n \to +\infty} \min\{B,\process(\omega^n)\}.
	\end{equation*} 
	We prove the converse inequality by contradiction. 
	Suppose that 
	\begin{align}
	\min \{B,\liminf_{n \to +\infty} \process(\omega^n)\} &> \liminf_{n \to +\infty} \min\{B,\process(\omega^n)\} \nonumber,
	\end{align}
	or, equivalently, that
	\begin{equation*}
	\min \{B,\liminf_{n \to +\infty} \process(\omega^n)\} > \sup_{m} \inf_{n \geq m} \min\{B,\process(\omega^n)\}.
	\end{equation*}
	Then there is some $\epsilon > 0$ such that 
	\begin{equation*}
	\min \{B,\liminf_{n \to +\infty} \process(\omega^n)\} - \epsilon > \inf_{n \geq m} \min\{B,\process(\omega^n)\} = \min\{B, \inf_{n \geq m} \process(\omega^n)\}
	\end{equation*}
	for all $ m \in \natz{}$.
	Since $\min \{B,\liminf_{n \to +\infty} \process(\omega^n)\} - \epsilon \leq B$, this implies that
	\begin{align*}
	\inf_{n \geq m} \process(\omega^n) < \min \{B,\liminf_{n \to +\infty} \process(\omega^n)\} - \epsilon \leq \liminf_{n \to +\infty} \process(\omega^n) - \epsilon \text{ for all $m \in \natz{}$,}
	\end{align*}
	from which we infer that
	\begin{align*}
		 \inf_{n \geq m} \process(\omega^n) < \sup_{k} \inf_{n \geq k} \process(\omega^n) - \epsilon \text{ for all $m \in \natz{}$,}
	\end{align*} 
	contradicting the definition of the supremum operator.
	\qed

\end{proof}

\begin{lemma}\label{lemma: multiplicative supermartingale}
A positive real process $\martingale{}$ is a supermartingale if and only if, for all $s \in\situations{}$, $\lupprev{}(\mu\martingale{}(s) \vert s) \leq 1$.
\end{lemma}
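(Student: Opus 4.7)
The plan is to reduce the equivalence to a simple affine relation between the process multiplier $\mu\martingale(s)$ and the process difference $\Delta\martingale(s)$, and then read off the conclusion from the coherence axioms \ref{coherence 3} and \ref{coherence 6} applied to the local upper expectation $\lupprev(\cdot\vert s)$.

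I would first fix an arbitrary situation $s = x_{1:n} \in \situations$. Since $\martingale$ is positive, $\martingale(s)$ is a strictly positive real number, and for every $x_{n+1} \in \statespace$ the definitions of the process multiplier and the process difference give
\begin{equation*}
\mu\martingale(s)(x_{n+1}) = \frac{\martingale(x_{1:n+1})}{\martingale(s)} = 1 + \frac{1}{\martingale(s)}\Delta\martingale(s)(x_{n+1}).
\end{equation*}
In particular, $\mu\martingale(s)$ is a bounded real map on $\statespace$, hence a local gamble, so $\lupprev(\mu\martingale(s)\vert s)$ is well-defined.

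Next, I would apply the coherence properties of $\lupprev(\cdot\vert s)$. The constant-additivity property \ref{coherence 6} removes the additive $1$, and the non-negative homogeneity property \ref{coherence 3}, applied with the strictly positive scalar $\lambda = 1/\martingale(s)$, pulls out the multiplicative factor, yielding
\begin{equation*}
\lupprev(\mu\martingale(s)\vert s) = 1 + \frac{1}{\martingale(s)}\, \lupprev(\Delta\martingale(s)\vert s).
\end{equation*}
Since $\martingale(s) > 0$, the inequality $\lupprev(\mu\martingale(s)\vert s) \leq 1$ is therefore equivalent to $\lupprev(\Delta\martingale(s)\vert s) \leq 0$. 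As $s$ was arbitrary, this gives the desired equivalence: $\martingale$ is a supermartingale, i.e. $\lupprev(\Delta\martingale(s)\vert s)\leq 0$ for every $s \in \situations$, if and only if $\lupprev(\mu\martingale(s)\vert s) \leq 1$ for every $s \in \situations$.

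There is essentially no obstacle here; the only thing to be mindful of is that positivity of $\martingale$ is used twice---once to ensure that the ratio $\martingale(x_{1:n+1})/\martingale(s)$ is defined and bounded so that $\mu\martingale(s)$ is a genuine local gamble, and once to guarantee that $1/\martingale(s) > 0$ so that axiom \ref{coherence 3} applies (rather than needing a signed scaling rule, which coherent upper expectations do not satisfy).
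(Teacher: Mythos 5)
Your proof is correct and follows essentially the same route as the paper: both rest on the affine identity relating $\mu\martingale(s)$ and $\Delta\martingale(s)$ (the paper writes it as $\martingale(s)\,\mu\martingale(s)=\Delta\martingale(s)+\martingale(s)$, you divide through by $\martingale(s)$ first) and then apply \ref{coherence 3} and \ref{coherence 6} to transfer the inequality. The two presentations are interchangeable.
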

\begin{proof}
Fix any $s \in\situations{}$.
Since $\martingale{}(s) \not= 0$, it is clear that $\martingale{}(s) \, \mu\martingale{}(s) = \Delta\martingale{}(s) + \martingale{}(s)$.
Since moreover $\martingale{}(s) > 0$, we have, by the non-negative homogeneity [\ref{coherence 3}] of $\lupprev{}(\cdot \vert s)$, that $\lupprev{}(\martingale{}(s) \, \mu\martingale{}(s) \vert s) = \martingale{}(s) \lupprev{}(\mu\martingale{}(s) \vert s)$ and furthermore, by the constant additivity [\ref{coherence 6}] of $\lupprev{}(\cdot \vert s)$, that $\lupprev{}(\Delta\martingale{}(s) + \martingale{}(s) \vert s) = \lupprev{}(\Delta\martingale{}(s) \vert s) + \martingale{}(s)$.
Again using the positivity of $\martingale{}(s)$, it is now obvious that $\lupprev{}(\Delta\martingale{}(s) \vert s) \leq 0$ if and only if $\lupprev{}(\mu\martingale{}(s) \vert s) \leq 1$.
\qed
\end{proof}

\begin{lemma}\label{lemma: Convex sum}
Consider any $\alpha \in \posreals{}$ and any countable collection $\{\martingale{}_n\}_{n \in \nats{}}$ of positive test supermartingales such that $\mu \martingale_n{}(s) \leq \alpha$ for all $s \in \situations{}$.
Let $\martingale{}$ be any convex combination of them:
\begin{equation*}
	\martingale{}(s) \coloneqq \sum_{n \in \nats{}} \lambda_n \martingale{}_n(s) \text{ for all } s \in \situations{},
\end{equation*}
where the coefficients $\lambda_n \geq 0$ sum to $1$.
Then $\martingale{}$ is also a positive test supermartingale such that $\mu \martingale{}(s) \leq \alpha$ for all $s \in \situations{}$.
\end{lemma}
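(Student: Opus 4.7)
The plan is to verify each claim about $\martingale{}$ in turn, leveraging the multiplicative characterisation of supermartingales (Lemma~\ref{lemma: multiplicative supermartingale}) together with the finiteness of $\statespace{}$.

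First, I would check that $\martingale{}$ is a well-defined positive real process with $\martingale{}(\Box)=1$. Since $\mu\martingale{}_n(s)\leq\alpha$ for every $s\in\situations{}$ and $n\in\nats{}$, an easy induction on the length of~$s$ shows $\martingale{}_n(s)\leq\alpha^{\vert s\vert}$ for all $n$, so $\martingale{}(s)\leq\alpha^{\vert s\vert}\sum_n\lambda_n=\alpha^{\vert s\vert}<+\infty$. Positivity of each $\martingale{}_n$ together with the fact that at least one $\lambda_n>0$ yields $\martingale{}(s)>0$, and $\martingale{}(\Box)=\sum_n\lambda_n\martingale{}_n(\Box)=\sum_n\lambda_n=1$.

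Next I would compute the process multiplier. Fix any $s=x_{1:n}\in\situations{}$ and $x_{n+1}\in\statespace{}$. Using $\martingale{}_k(s\cdot x_{n+1})=\martingale{}_k(s)\,\mu\martingale{}_k(s)(x_{n+1})$ and the definition of $\martingale{}$, I get
\begin{equation*}
\mu\martingale{}(s)(x_{n+1})
=\frac{\sum_{k}\lambda_k\,\martingale{}_k(s)\,\mu\martingale{}_k(s)(x_{n+1})}{\martingale{}(s)}
=\sum_{k}w_k\,\mu\martingale{}_k(s)(x_{n+1}),
\end{equation*}
where $w_k\coloneqq\lambda_k\martingale{}_k(s)/\martingale{}(s)\geq0$ and $\sum_{k}w_k=1$. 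Since every $\mu\martingale{}_k(s)\leq\alpha$, this immediately yields the bound $\mu\martingale{}(s)\leq\alpha$.

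It remains to show $\lupprev{}(\mu\martingale{}(s)\vert s)\leq1$, after which Lemma~\ref{lemma: multiplicative supermartingale} finishes the proof. For any finite $N\in\nats{}$, subadditivity~\ref{coherence 2} and non-negative homogeneity~\ref{coherence 3}, together with $\lupprev{}(\mu\martingale{}_k(s)\vert s)\leq1$ (which holds by Lemma~\ref{lemma: multiplicative supermartingale} applied to each positive test supermartingale $\martingale{}_k$), give $\lupprev{}(h_N\vert s)\leq\sum_{k=1}^{N}w_k\leq1$ for the truncation $h_N\coloneqq\sum_{k=1}^{N}w_k\mu\martingale{}_k(s)$. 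The main obstacle is the passage from this finite-sum bound to the countable convex combination $h\coloneqq\sum_k w_k\mu\martingale{}_k(s)$; I would overcome it by exploiting the finiteness of $\statespace{}$. Indeed, the estimate $\sup_{x\in\statespace{}}\vert h(x)-h_N(x)\vert\leq\alpha\sum_{k>N}w_k\to0$ shows that $h_N\to h$ uniformly on $\statespace{}$, so continuity with respect to uniform convergence~\ref{coherence 7} gives $\lupprev{}(h\vert s)=\lim_N\lupprev{}(h_N\vert s)\leq1$, as required. The finiteness of $\statespace{}$ is essential here, since it is what automatically upgrades pointwise convergence of the $h_N$ to uniform convergence.
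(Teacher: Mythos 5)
Your proposal is correct and follows essentially the same route as the paper: establish that $\martingale{}$ is a positive real process via the $\alpha$-bound on the multipliers, express $\mu\martingale{}(s)$ as a (countable) convex combination of the $\mu\martingale{}_n(s)$, bound the finite truncations using \ref{coherence 2}, \ref{coherence 3} and Lemma~\ref{lemma: multiplicative supermartingale}, and pass to the limit via uniform convergence on the finite set $\statespace{}$ and \ref{coherence 7}. The only differences are cosmetic (you normalise by $\martingale{}(s)$ before truncating and give an explicit tail bound for the uniform convergence, where the paper works with unnormalised partial sums and invokes pointwise-implies-uniform on a finite set).
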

\begin{proof}
	Since all the elements in the sum are non-negative and at least one positive, $\martingale{}(s)$ is a, possibly extended, positive real number for all $s \in \situations{}$.
	It is also clear that $\martingale{}(\Box) = 1$.

	We prove by induction that $\martingale{}$ is a real process such that $\mu \martingale{}(s) \leq \alpha$ for all $s \in \situations{}$.
	We already know that $\martingale{}(\Box)$ is a positive real number.
	Now, consider any $x_{1:k} \in \situations{} \setminus \Box$ and suppose that $\martingale{}(x_{1:k-1})$ is a positive real number.
	Then, by the definition of $\martingale{}$, we have
	\begin{align*}
		\martingale{}(x_{1:k}) 
		= \sum_{n \in \nats{}} \lambda_n \martingale{}_n(x_{1:k}) 
		&= \sum_{n \in \nats{}} \lambda_n \martingale{}_n(x_{1:k-1}) \mu\martingale{}_n(x_{1:k-1})(x_k)  \\
		&\leq \sum_{n \in \nats{}} \lambda_n \martingale{}_n(x_{1:k-1}) \, \alpha \\
		&= \alpha \sum_{n \in \nats{}} \lambda_n \martingale{}_n(x_{1:k-1}) \\
		&= \alpha \martingale{}(x_{1:k-1}),
	\end{align*}
	where the inequality follows from $\mu \martingale_n{}(x_{1:k-1})(x_k) \leq \alpha$ and the positivity of $\martingale{}_n$ and the non-negativity of $\lambda_n$.
	So it is clear that $\martingale{}(x_{1:k})$ is dominated by a positive real number, and since $\martingale{}$ is moreover positive, it follows that $\martingale{}(x_{1:k})$ is a positive real number.
	Hence, by induction, $\martingale{}$ is a positive real process.
	It moreover follows from the reasoning above that $\mu \martingale{}(x_{1:k-1})(x_k) \leq \alpha$ for any $x_{1:k} \in \situations{} \setminus \Box$, or equivalently, that $\mu \martingale{}(s) \leq \alpha$ for any $s \in \situations{}$.

	We now prove that $\martingale{}$ is a supermartingale, and therefore, since $\martingale{}(\Box) = 1$, a test supermartingale.
	Fix any situation $s \in \situations{}$.
	Recall that the value of the gamble $\mu\martingale{}(s)$ in some state $x \in \statespace{}$ is given by
	\begin{align}\label{eq: proof lemma convex sum}
		\mu\martingale{}(s)(x) 
		 = \martingale{}(s)^{-1} \martingale{}(sx)
		 &= \martingale{}(s)^{-1} \sum_{n \in \nats{}} \lambda_n \martingale{}_n(sx) \nonumber \\
		&= \martingale{}(s)^{-1} \sum_{n \in \nats{}} \lambda_n \martingale{}_n(s) \mu\martingale{}_n(s)(x).
	\end{align}
	Let the sequence $\{f_n\}_{n \in \nats{}}$ of local gambles on $\statespace{}$ be defined by
	\begin{align*}
		f_n(x) \coloneqq \sum_{i=1}^n \lambda_i \martingale{}_i(s) \mu\martingale{}_i(s)(x) \text{ for all } x \in \statespace{} \text{ and all } n \in \nats{}. 
	\end{align*}
	If we fix any state $x \in \statespace{}$, we infer from \eqref{eq: proof lemma convex sum} that 
	\begin{align*}
		 \mu\martingale{}(s)(x) =  \martingale{}(s)^{-1} \lim_{n \to +\infty} f_n(x).
	\end{align*}
	Since $\{f_n(x)\}_{n \in \nats{}}$ is clearly a non-decreasing sequence, its limit exists. 
	Moreover, that $\martingale{}$ is a real process implies that the sequence $\{f_n(x)\}_{n \in \nats{}}$ converges to the real number $\lim_{n \to +\infty} f_n(x)$.
	Now, as the argument above holds for any $x \in \statespace{}$, and since we work in a local state space $\statespace$ that is finite, it follows that the sequence $\{f_n\}_{n \in \nats{}}$ of local gambles converges \emph{uniformly} to the local gamble $\lim_{n \to +\infty} f_n$.
	Hence, because the coherent upper prevision $\lupprev{}(\cdot \vert s)$ is continuous with respect to uniform convergence [\ref{coherence 7}], and using the non-negative homogeneity~[\ref{coherence 3}] and the subadditivity~[\ref{coherence 2}] of $\lupprev{}(\cdot \vert s)$, we find that 
	\begin{align*}
	\lupprev( \mu\martingale{}(s) \vert s) 
	&= \lupprev \big( \martingale{}(s)^{-1} \lim_{n \to +\infty} f_n  \big\vert s\big) \\
	&\overset{\text{\ref{coherence 3}}}{=}\martingale{}(s)^{-1} \lupprev \big( \lim_{n \to +\infty} f_n  \big\vert s\big) \\
	&\overset{\text{\ref{coherence 7}}}{=} \martingale{}(s)^{-1} \lim_{n \to +\infty} \lupprev \big( f_n  \big\vert s\big) \\
	&\overset{\text{\ref{coherence 2}, \ref{coherence 3}}}{\leq} \martingale{}(s)^{-1} \lim_{n \to +\infty} \sum_{i=1}^n \lambda_i \martingale{}_i(s) \lupprev{}(\mu\martingale{}_i(s) \vert s) \\
	&\leq \martingale{}(s)^{-1} \sum_{n \in \nats{}} \lambda_n \martingale{}_n(s) = 1,
	\end{align*}
	where the last inequality holds because, by Lemma~\ref{lemma: multiplicative supermartingale}, $\lupprev (\mu \martingale{}_i(s) \vert s)\leq 1$ for all $i \in \nats{}$.
	Now, since $\lupprev( \mu\martingale{}(s) \vert s) \leq 1$ for any situation $s \in \situations{}$, Lemma~\ref{lemma: multiplicative supermartingale} implies that $\martingale{}$ is indeed a supermartingale, and therefore a positive test supermartingale. 
	\qed
\end{proof}

\noindent
Consider any two variables $g,h \in \setofextvariables{}$ and any situation~$s \in \situations{}$.
From now on, we use $g \leq_s f$ to denote that $g(\omega) \leq f(\omega)$ for all $\omega \in \Gamma(s)$, and similarly for $\geq_s$, $>_s$ and $<_s$.

We say that an event $A \subseteq \samplespace{}$ is \emph{strictly almost sure (s.a.s.)} if there is a test supermartingale that converges to $+\infty$ on $\samplespace{} \setminus A$. 

\begin{theorem}[L\'evy's zero-one law]\label{Bounded Levy}
	Consider any gamble $f \in \setofgambles(\samplespace{})$ and any real number $\alpha > 1$.
	Then the event
	\begin{align*}
		A \coloneqq \Big\{\omega \in \samplespace{} \colon \liminf_{n \to +\infty} \upprevvovk(f \vert \omega^n) \geq f(\omega) \Big\}
	\end{align*}
	is strictly almost sure.
	Moreover, the test supermartingale $\mathscr{T}$ that converges to $+\infty$ on $\samplespace{} \setminus A$ can be chosen such that it is positive and that $\mu\mathscr{T}(s) \leq \alpha$ for all $s \in \situations{}$.
\end{theorem}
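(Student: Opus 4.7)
My plan is to cover $A^c$ by a countable family of simpler events and then construct a test supermartingale for each piece. Since $f$ is a gamble, for every $\omega \in A^c$ there exist rationals $p,q$ with
\[
\liminf_{n \to +\infty} \upprevvovk(f \vert \omega^n) < p < q < f(\omega),
\]
so that
\[
A^c \subseteq \bigcup_{\substack{p,q \in \mathbb{Q} \\ p<q}} B_{p,q}, \quad B_{p,q} \coloneqq \big\{\omega \in \samplespace \colon \liminf_{n \to +\infty} \upprevvovk(f \vert \omega^n) < p \text{ and } f(\omega) > q\big\}.
\]
It therefore suffices to produce, for each such pair $(p,q)$, a positive test supermartingale $\mathscr{T}_{p,q}$ with $\mu\mathscr{T}_{p,q}(s) \leq \alpha$ for every $s \in \situations$ that satisfies $\lim_{n \to +\infty} \mathscr{T}_{p,q}(\omega^n) = +\infty$ on $B_{p,q}$. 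Any strictly positive convex combination $\mathscr{T} \coloneqq \sum \lambda_{p,q} \mathscr{T}_{p,q}$ then inherits, via Lemma~\ref{lemma: Convex sum}, the property of being a positive test supermartingale with $\mu\mathscr{T} \leq \alpha$, and diverges on $A^c$ because every summand is positive and at least one of them diverges on each $\omega \in A^c$.

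To construct a given $\mathscr{T}_{p,q}$, I would exploit the gap $q - p > 0$ via an inductive ``betting'' strategy built over a nested sequence of stopping cuts $U_0 = \{\Box\} \sqsubset U_1 \sqsubset \cdots$. Whenever a situation $s \in U_k$ satisfies $\upprevvovk(f \vert s) < p$, Definition~\ref{def:upperexpectation} yields a bounded below supermartingale $\mathscr{M}^s \in \setofsupmartb$ with $\mathscr{M}^s(s) < p$ and $\liminf_n \mathscr{M}^s(\omega^n) \geq f(\omega)$ for all $\omega \in \Gamma(s)$. I would paste in a rescaled and shifted copy of $\mathscr{M}^s$, weighted by a fraction small enough to keep the overall multiplicative increment within $[0, \alpha]$, and take $U_{k+1}$ to be the first cut below $U_k$ at which each active bet has either matured (its superhedge has provably crossed a threshold strictly between $p$ and $q$) or been frozen. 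Lemma~\ref{LemmaBoundedSupermartingale} lets me truncate the pasted pieces so they remain bounded below, while Lemma~\ref{lemma: multiplicative supermartingale} reduces the verification that $\mathscr{T}_{p,q}$ is a supermartingale to checking the local condition $\lupprev(\mu\mathscr{T}_{p,q}(s) \vert s) \leq 1$ at every $s \in \situations$.

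On $B_{p,q}$, the $\liminf$ hypothesis forces infinitely many bets to be initiated along the path, and $f(\omega) > q$ combined with $\liminf \mathscr{M}^s \geq f$ ensures that each of these bets eventually matures, yielding a multiplicative factor of at least $1 + \kappa$ for some $\kappa = \kappa(p,q,\alpha,\sup\vert f\vert) > 0$; hence $\mathscr{T}_{p,q}(\omega^n) \to +\infty$. The main obstacle is the discrepancy between the pathwise divergence demanded of $\mathscr{T}_{p,q}$ and the mere \emph{liminf}-superhedging provided by $\mathscr{M}^s$: a bet initiated at $s$ may fail to mature on branches of $\Gamma(s)$ where $\liminf \mathscr{M}^s < q$, so the stopping cuts $U_k$ are in general partial rather than complete. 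The construction must therefore freeze $\mathscr{T}_{p,q}$ as a positive constant on branches where a bet never matures, and calibrate the bet fraction carefully against $\alpha$ and $\sup\vert f\vert$ so that the resulting positive real process simultaneously stays positive, satisfies $\mu\mathscr{T}_{p,q} \leq \alpha$, and still delivers a strict multiplicative gain per successful bet. This bookkeeping is where the core technical work lies.
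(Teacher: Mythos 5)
Your high-level architecture coincides with the paper's: cover $\samplespace{}\setminus A$ by countably many events indexed by rational pairs, build for each pair a positive test supermartingale with one-step multiplier at most $\alpha$ that diverges on that piece, and merge them by a countable convex combination justified by Lemma~\ref{lemma: Convex sum}. You also correctly identify the central obstacle, namely that the hedging supermartingales only dominate $f$ in the $\liminf$, so the ``maturity'' cuts are in general partial. The difficulty is that your proposal defers precisely the step where the proof lives (``this bookkeeping is where the core technical work lies''), and the two concrete devices you offer for it are, respectively, missing the key idea and unworkable as stated.

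The missing idea is the normalisation that makes the bookkeeping disappear: the paper first adds a constant to $f$ (harmless by constant additivity of $\upprevvovk(\cdot\vert s)$) so that $\inf f>0$ and $\nicefrac{\sup f}{\inf f}\le\alpha$, and then truncates each hedging supermartingale into the band $[\inf f,\sup f]$ using Lemmas~\ref{LemmaBoundedSupermartingale} and~\ref{Lemma: the minimum - limit inferior switch}. After that, no calibration of a ``bet fraction'' is needed: for a rational pair $0<a<b$ one takes the pure product of the multipliers $\mu\martingale_k^{a,b}$ over the intervals $[V_k^{a,b},U_k^{a,b})$; every one-step multiplier automatically lies in $(0,\nicefrac{\sup f}{\inf f}]\subseteq(0,\alpha]$, and each completed round gains a factor of at least $\nicefrac{b}{a}>1$. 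Your plan leaves the calibration entirely unspecified, and with unnormalised rationals $p<q$ (possibly nonpositive) a multiplicative gain of the form $\nicefrac{q}{p}$ is not even meaningful, so it is not clear your additive ``rescaled and shifted copy'' can simultaneously deliver positivity, $\mu\mathscr{T}_{p,q}\le\alpha$, and a uniform per-round gain. The unworkable device is the proposal to ``freeze $\mathscr{T}_{p,q}$ as a positive constant on branches where a bet never matures'': whether a bet matures is not decidable at any finite situation, so this does not define a process. The correct resolution---and the reason the normalisation and truncation matter---is that the process simply keeps riding the current bet forever on such branches; positivity is then guaranteed because the truncated hedge is bounded below by $\inf f>0$. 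As it stands, the proposal is a correct skeleton whose load-bearing construction is absent.
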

\begin{proof}
	 Since $\upprevvovk(\cdot \vert s)$ is constant additive [\ref{coherence 6}*], we have, for any $\beta \in \reals{}$, that $\liminf_{n \to +\infty} \upprevvovk(f \vert \omega^n) \geq f(\omega)$ if and only if $\liminf_{n \to +\infty} \upprevvovk(f + \beta \vert \omega^n) \geq f(\omega) + \beta$.
	 Therefore, and because f is bounded, we can assume without loss of generality that $f$ is a gamble such that both $\inf f > 0$ and $\nicefrac{\sup f}{\inf f} \leq \alpha$.
	 It suffices to add a sufficiently large positive real number to the gamble initially considered. 

	We now associate with any couple of rational numbers $0<a<b$ the following recursively constructed sequences of cuts $\{U_k^{a,b}\}_{k \in \natz{}}$ and $\{V_k^{a,b}\}_{k \in \nats{}}$. 
	Let $U_0^{a,b} \coloneqq \{ \Box \}$ and, for $k \in \nats$,
	\begin{enumerate}
		\item 
		let
		\begin{align*}
			V_k^{a,b} \coloneqq \{ s \in \situations{} \colon  U_{k-1}^{a,b} \sqsubset s , \ \upprevvovk(f \vert s) < a \text{ and } (\forall t \in (U_{k-1}^{a,b} , s)) \ \upprevvovk(f \vert t) \geq a \};
		\end{align*} 
		\item if $V_k^{a,b}$ is non-empty, choose a positive supermartingale $\martingale_{k}^{a,b} \in \setofsupmart$ such that 
		\begin{align*}
		&\inf f \leq \martingale_k^{a,b}(t) \leq \sup f \text{ for all } t \in \situations{}, \\
		&\martingale_{k}^{a,b}(s) < a \text{ and } \liminf \martingale_k^{a,b}  \geq_s f \text{ for all } s \in V_k^{a,b}
		\end{align*}
		and let
		\begin{align*}
			U_k^{a,b} \coloneqq \{s \in \situations{} \colon V_{k}^{a,b} \sqsubset s : \martingale_{k}^{a,b}(s) > b \text{ and } (\forall t \in (V_{k}^{a,b} , s)) \ \martingale_{k}^{a,b}(t) \leq b \};
		\end{align*}
		if $V_k^{a,b}$ is empty, let $U_k^{a,b} \coloneqq \emptyset$.
	\end{enumerate}
	The cuts $U_k^{a,b}$ and $V_k^{a,b}$ can be partial or complete.
	We now first show that, if $V_k^{a,b}$ is non-empty, there always is a supermartingale $\martingale_{k}^{a,b}$ that satisfies the conditions above. 
	We infer from the definition of the cut $V_k^{a,b}$ that
	\begin{equation*}
	 \inf \bigg\{ \martingale(s) :  \martingale \in \setofsupmartb \text{ and } \liminf \martingale \geq_s f \bigg\} < a \text{ for all } s \in V_k^{a,b}.
	 \end{equation*}
	So, for all $s \in V_k^{a,b}$, we can choose a supermartingale $\martingale_{k,s}^{a,b}$ such that $\martingale_{k,s}^{a,b}(s) < a$ and $\liminf \martingale_{k,s}^{a,b} \geq_s f$. 
	Consider now the real process $\martingale_{k,\ast}^{a,b}$ defined, for all $ t \in \situations{}$, by
	\begin{align*}
		\martingale_{k,\ast}^{a,b}(t) \coloneqq
		\begin{cases}
		\martingale_{k,s}^{a,b}(t) \ &\text{ if } s \sqsubseteq t \text{ for some } s \in V_k^{a,b}; \\
		a \ &\text{ otherwise.}
		\end{cases}
	\end{align*}
	It is clear that $\martingale_{k,\ast}^{a,b}(s) < a \text{ and } \liminf \martingale_{k,\ast}^{a,b}  \geq_s f$ for all $s \in V_k^{a,b}$.

	We show that $\martingale_{k,\ast}^{a,b}$ is also a supermartingale.
	Fix any $t \in \situations{}$ and consider two cases. 
	If $V_k^{a,b} \sqsubseteq t$, then $\Delta\martingale_{k,\ast}^{a,b}(t) = \Delta\martingale_{k,s}^{a,b}(t)$ for some $s \in V_k^{a,b}$, and therefore $\lupprev{}(\Delta\martingale_{k,\ast}^{a,b}(t) \vert t) = \lupprev{}(\Delta\martingale_{k,s}^{a,b}(t) \vert t) \leq 0$.
	If $V_k^{a,b} \not\sqsubseteq t$, then, for any $x \in \statespace$, we have either $tx \in V_k^{a,b}$ and therefore $\Delta\martingale_{k,\ast}^{a,b}(t)(x) = \martingale_{k,tx}^{a,b}(tx) - a < 0$, or $tx \not\in V_k^{a,b}$ and therefore $\Delta\martingale_{k,\ast}^{a,b}(t)(x) = a - a = 0$.
	Hence, we have $\Delta\martingale_{k,\ast}^{a,b}(t) \leq 0$, and therefore, by \ref{coherence 1}, $\lupprev{}(\Delta\martingale_{k,\ast}^{a,b}(t) \vert t) \leq 0$. 
	As a consequence, $\martingale_{k,\ast}^{a,b}$ is a supermartingale. 

	Furthermore, note that $a > \inf f$.
	Indeed, for any $s \in V_k^{a,b}$, it follows directly from Lemma \ref{lemma1} that
	\begin{align*}
	a > \martingale_{k,s}^{a,b}(s) \geq \inf_{\omega \in \Gamma(s)} \liminf \martingale_{k,s}^{a,b}(\omega) \geq \inf_{\omega \in \Gamma(s)} f \geq \inf f.
	\end{align*}
	Since $V_k^{a,b}$ is non-empty, this implies that $a > \inf f$.
	Therefore, we have that $\martingale_{k,\ast}^{a,b}(t) = a > \inf f$ for any $t \in \situations{}$ such that $V_k^{a,b} \not\sqsubseteq t$.
	On the other hand, for any $t \in \situations{}$ such that $V_k^{a,b} \sqsubseteq t$, it follows from Lemma \ref{lemma1} that $\martingale_{k,\ast}^{a,b}(t) = \martingale_{k,s}^{a,b}(t) \geq \inf f$ for some $s \in V_k^{a,b}$.
	Hence, $\martingale_{k,\ast}^{a,b}(t) \geq \inf f > 0$ for all $t \in \situations{}$, which implies that $\martingale_{k,\ast}^{a,b}$ is bounded below.

	Now, let $\martingale_k^{a,b}$ be defined by $\martingale_k^{a,b}(t) \coloneqq \min\{ \martingale_{k,\ast}^{a,b}(t), \ \sup f \}$ for all $t \in \situations{}$.
	Then it is clear that $0 < \inf f \leq \martingale_k^{a,b}(t) \leq \sup f$ for all $t \in \situations{}$ and moreover, by Lemma \ref{LemmaBoundedSupermartingale}, $\martingale_k^{a,b}$ is a bounded below supermartingale that is clearly positive.
	Furthermore, it follows from Lemma \ref{Lemma: the minimum - limit inferior switch} that, for any $\omega \in \samplespace{}$,
	\begin{align*}
	\liminf_{n \to +\infty} \martingale_k^{a,b}(\omega^n) = \min \Big\{ \liminf_{n \to +\infty} \martingale_{k,\ast}^{a,b}(\omega^n), \ \sup f \Big\}.
	\end{align*}
	Since, for all $s \in V_k^{a,b}$ and $\omega \in \Gamma(s)$, $\liminf_{n \to +\infty} \martingale_{k,\ast}^{a,b}(\omega^n) \geq f(\omega)$ and $\sup f \geq f(\omega)$, this implies that $\liminf \martingale_k^{a,b} \geq_s f$ for all $s \in V_k^{a,b}$.
	Hence, $\martingale_k^{a,b}$ is a supermartingale satisfying the conditions above.

	Since all $\martingale_k^{a,b}$ are positive, we can use their process multipliers $\mu \martingale_k^{a,b}$, which are positive as well, to construct a new gamble process $\mu \mathscr{T}^{a,b}$, defined by
	 \begin{align*}
	 	\mu \mathscr{T}^{a,b} (s) \coloneqq \begin{cases}
	 		\mu \martingale_{k}^{a,b}(s)   &\text{ if } s \in  [V_{k}^{a,b} , U_{k}^{a,b}) \text{ for some } k \in \nats;\\
	 		1 &\text{ otherwise,}
	 		\end{cases}
	 		\text{ for all } s \in \situations{}.
	 \end{align*}
	 We next prove that the corresponding real process $\mathscr{T}^{a,b} \coloneqq \mu^{-1} (\mu \mathscr{T}^{a,b})$ is a positive test supermartingale that converges to $+ \infty$ on all paths $\omega \in \samplespace{}$ such that
	 \begin{equation}\label{Eq: levy conv to infty}
	 \liminf_{n \to +\infty} \upprevvovk(f \vert \omega^n) < a < b < f(\omega),
	 \end{equation}
	 and that moreover $\mu\mathscr{T}^{a,b}(s) \leq \alpha$ for all $s \in \situations{}$.
	 It follows from the definition of $\mathscr{T}^{a,b}$ that $\mathscr{T}^{a,b} (\Box) \coloneqq 1$. 
	 For all $k \in \nats{}$ such that $V_k^{a,b}$ is non-empty, we have that $0 < \inf f \leq \martingale_k^{a,b}(t) \leq \sup f$ for all $t \in \situations{}$, and therefore $\mu \martingale_k^{a,b}$ is a positive gamble process such that $\mu \martingale_k^{a,b}(s) \leq \nicefrac{\sup f}{\inf f} \leq \alpha$ for all $s \in \situations{}$.
	 This implies directly that $\mu\mathscr{T}^{a,b}$ is also a positive gamble process and, together with $\alpha > 1$, it also implies that $\mu\mathscr{T}^{a,b}(s) \leq \alpha$ for all $s \in \situations{}$.
	 That $\mu\mathscr{T}^{a,b}$ is a positive gamble process implies, together with $\mathscr{T}^{a,b} (\Box) = 1$, that $\mathscr{T}^{a,b}$ is a real positive process.

	 Furthermore, for any $s \in \situations$, either $\mu \mathscr{T}^{a,b}(s) = \mu \martingale_{k}^{a,b}(s)$ for some $k \in \nats{}$, and therefore Lemma~\ref{lemma: multiplicative supermartingale} implies that $\lupprev(\mu \mathscr{T}^{a,b}(s) \vert s) = \lupprev(\mu \martingale_{k}^{a,b}(s) \vert s) \leq 1$ because $\martingale_{k}^{a,b}$ is a supermartingale, either $\mu \mathscr{T}^{a,b}(s) = 1$, which implies, together with \ref{coherence 5}, that $\lupprev(\mu \mathscr{T}^{a,b}(s) \vert s) = 1$.
	 As a result, we have that $\lupprev(\mu \mathscr{T}^{a,b}(s) \vert s) \leq 1$ for all $s \in \situations{}$, and hence, we infer from Lemma~\ref{lemma: multiplicative supermartingale} that $\mathscr{T}^{a,b}$ is a positive test supermartingale.

	 Next, we show that $\mathscr{T}^{a,b}$ converges to $+ \infty$ on all paths $\omega \in \samplespace{}$ for which (\ref{Eq: levy conv to infty}) holds.
	 Consider such a path $\omega$.
	 Then $\omega$ goes through all the cuts $U_0^{a,b} \sqsubset V_1^{a,b} \sqsubset U_1^{a,b} \sqsubset ... \sqsubset V_n^{a,b} \sqsubset U_n^{a,b} \sqsubset ... $. 
	 Indeed, it is trivial that $\omega$ goes through $U_0^{a,b} = \{\Box\}$. 
	 Furthermore, it follows from $\liminf_{n \to +\infty} \upprevvovk(f \vert \omega^n) < a$ that there exists, for all $m \in \natz{}$, some $n \in \nats$ such that $n > m$ and $\upprevvovk(f \vert \omega^n) < a$. 
	 Take the first $n_1 \in \nats$ such that $\upprevvovk(f \vert \omega^{n_1}) < a$. 
	 Then it follows from the definition of $V_1^{a,b}$ that $\omega^{n_1} \in V_1^{a,b}$. 
	 Next, it follows from $\liminf_{n \to +\infty} \martingale_1^{a,b} (\omega^n) \geq f(\omega) > b$ that there exists some $m_1 \in \nats$ for which $m_1 > n_1$ and $\martingale_1^{a,b} (\omega^{m_1}) > b$. 
	 Take the first such $m_1$, then it follows from the definition of $U_1^{a,b}$ that $\omega^{m_1} \in U_1^{a,b}$. 
	 Repeating similar arguments over and over again allows us to conclude that $\omega$ indeed goes through all the cuts $U_0^{a,b} \sqsubset V_1^{a,b} \sqsubset U_1^{a,b} \sqsubset ... \sqsubset V_n^{a,b} \sqsubset U_n^{a,b} \sqsubset ... $.

	 In what follows, we use the following notation. 
	 For any situation~$s$ and for any $k \in \natz{}$, when $U_k^{a,b} \sqsubset s$, we denote by $u_k^s$ the (necessarily unique) situation in $U_k^{a,b}$ such that $u_k^s \sqsubset s$; observe that $u_0^s = \Box$. 
	 Similarly, for any $k \in \nats{}$, when $V_k^{a,b} \sqsubset s$, we denote by $v_k^s$ the (necessarily unique) situation in $V_k^{a,b}$such that $v_k^s \sqsubset s$.

	 For any situation~$s$ on a path $\omega \in \samplespace$ satisfying (\ref{Eq: levy conv to infty}) we now have one of the following cases:
	 \begin{enumerate}
	 \item
	 The first case is that $s \in [ \Box , V_1^{a,b}]$. Then we have 
	 \begin{equation*}
	 \mathscr{T}^{a,b} (s) = \mathscr{T}^{a,b} (\Box) = 1.
	 \end{equation*}
	  \item
	 The second case is that $s \in (V_k^{a,b} , U_{k}^{a,b}]$ for some $k \in \nats$. Then we have
	 \begin{equation*}
	 \mathscr{T}^{a,b} (s) = \Bigg( \prod_{\ell = 1}^{k-1} \frac{\martingale_{\ell}^{a,b}(u_\ell^{s})}{\martingale_{\ell}^{a,b}(v_{\ell}^{s})} \Bigg) \frac{\martingale_{k}^{a,b}(s)}{\martingale_{k}^{a,b}(v_{k}^{s})}.
	 \end{equation*} 
	 Since $\martingale{}_k^{a,b}(s) \geq \inf f > 0$ and, for all $\ell \in \{ 1,...,k\}$, $\martingale_{\ell}^{a,b}(u_\ell^{s}) > b > 0$ and $0 < \martingale_{\ell}^{a,b}(v_\ell^{s}) < a$, we get
	 \begin{equation*}
	 \mathscr{T}^{a,b} (s) > \Big( \frac{b}{a} \Big)^{k-1}  \frac{\martingale_{k}^{a,b}(s)}{a} \geq \Big( \frac{b}{a} \Big)^{k-1} \Big( \frac{\inf f}{a} \Big). 
	 \end{equation*} 
	 \item
	 The third case is that $s \in (U_k^{a,b} , V_{k+1}^{a,b}]$ for some $k \in \nats$. Then we have
	 \begin{equation*}
	 \mathscr{T}^{a,b} (s) = \prod_{\ell = 1}^{k} \frac{\martingale_{\ell}^{a,b}(u_\ell^{s})}{\martingale_{\ell}^{a,b}(v_{\ell}^{s})}.
	 \end{equation*} 
	 Again, as for all $\ell \in \{ 1,...,k\}$, $\martingale_{\ell-1}^{a,b}(u_\ell^{s}) > b > 0$ and $0 < \martingale_{\ell}^{a,b}(v_\ell^{s}) < a$, we get
	 \begin{equation*}
	 \mathscr{T}^{a,b} (s) > \Big( \frac{b}{a} \Big)^{k}. 
	 \end{equation*} 
	\end{enumerate}
	Because $\inf f > 0$ and $\frac{b}{a} > 1$, and because $\omega$ goes through all the cuts, we conclude that indeed $\lim_{n \to +\infty} \mathscr{T}^{a,b} (\omega^n) = + \infty$.

	To finish, we use the countable set of rational couples $K \coloneqq \{ (a,b) \in \mathbb{Q}^2 : 0<a<b \}$ to define the process $\mathscr{T}$:
	\begin{equation*}
		\mathscr{T} \coloneqq \sum_{(a,b) \in K} w^{a,b} \mathscr{T}^{a,b},
	\end{equation*}
	with coefficients $w^{a,b}>0$ that sum to $1$. 
	Hence, for all $s \in \situations$, $\mathscr{T}(s)$ is a countable convex combination of the real numbers $\mathscr{T}^{a,b}(s)$. 

	By Lemma \ref{lemma: Convex sum}, $\mathscr{T}$ is then also a positive test supermartingale such that $\mu\mathscr{T}(s) \leq \alpha$ for all $s \in \situations{}$.
	Moreover, $\mathscr{T}$ converges to $+ \infty$ on the paths $\omega \in \samplespace{}$ where $\liminf_{n \to +\infty} \upprevvovk(f \vert \omega^n) < f(\omega)$. 
	Indeed, consider such a path $\omega \in \samplespace{}$. 
	Then since $f(\omega) \geq \inf f > 0$, there is at least one couple $(a',b') \in K$ such that $\liminf_{n \to +\infty} \upprevvovk(f \vert \omega^n) < a' < b' < f(\omega)$, and as a consequence $\lim_{n \to +\infty} \mathscr{T}^{a',b'}(\omega^n) = + \infty$. 
	Then also $\lim_{n \to +\infty} w^{a',b'} \mathscr{T}^{a',b'}(\omega^n) = + \infty$ since $w^{a',b'} > 0$. 
	For all other couples $(a,b) \in K \setminus (a',b')$, we have $w^{a,b} \mathscr{T}^{a,b} > 0$, so $\mathscr{T}$ indeed converges to $+ \infty$ on $\omega$.
	\qed
	\end{proof}

\subsection{Upward Convergence and Fatou's Lemma}

We recall the following two properties from the literature.
\begin{lemma}[\protect{\cite[Corollary 3]{DECOOMAN201618}}]\label{lemma2}
Consider any $n \in \natz$, any situation $x_{1:n} \in \situations$, and any $(n+1)$-measurable gamble $h$ on $\Omega$. 
For any $x_{1:n} \in \statespace_{1:n}$, consider the local gamble $h(x_{1:n} X_{n+1})$ on $\statespace{}$, whose value in $x_{n+1} \in \statespace{}$ is given by $h(x_{1:n+1})$ for all $x_{n+1} \in \statespace{}$. 
Then
\begin{equation*}
		\upprevvovk (h \vert x_{1:n}) = \lupprev (h(x_{1:n} X_{n+1}) \vert x_{1:n}).
\end{equation*}
\end{lemma}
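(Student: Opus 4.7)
The plan is to prove the two inequalities separately. Throughout, write $g \coloneqq h(x_{1:n} X_{n+1})$ for the local gamble on $\statespace{}$ whose value at $x_{n+1}$ equals $h(x_{1:n+1})$, and let $c \coloneqq \lupprev(g \vert x_{1:n})$.

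For the inequality $\upprevvovk(h \vert x_{1:n}) \leq c$, I would exhibit a particular bounded below supermartingale that witnesses the infimum in Definition~\ref{def:upperexpectation}. Concretely, define $\martingale{}(s) \coloneqq h(x_{1:n+1})$ whenever $x_{1:n+1} \sqsubseteq s$ for some (necessarily unique) $x_{n+1} \in \statespace{}$, and $\martingale{}(s) \coloneqq c$ otherwise. Since $\statespace{}$ is finite and $h$ is a gamble, $\martingale{}$ is bounded. The supermartingale check is trivial at every $s \neq x_{1:n}$: from the construction, either $s$ already satisfies $x_{1:n+1} \sqsubseteq s$ and all its one-step extensions share the same value of $\martingale{}$, or no one-step extension of $s$ extends any $x_{1:n+1}$ (since $s$ is then a strict initial segment of $x_{1:n}$ or incomparable with $x_{1:n}$), so in both subcases $\Delta \martingale{}(s) \equiv 0$ and \ref{coherence 1} applies. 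At the distinguished situation $s = x_{1:n}$, $\Delta \martingale{}(x_{1:n}) = g - c$, and constant additivity \ref{coherence 6} of $\lupprev{}(\cdot \vert x_{1:n})$ immediately gives $\lupprev{}(g - c \vert x_{1:n}) = c - c = 0$. Finally, every $\omega \in \Gamma(x_{1:n})$ satisfies $\martingale{}(\omega^k) = h(\omega^{n+1}) = h(\omega)$ for all $k \geq n+1$, using the $(n{+}1)$-measurability of $h$, so $\liminf \martingale{} = h$ on $\Gamma(x_{1:n})$ and $\upprevvovk(h \vert x_{1:n}) \leq \martingale{}(x_{1:n}) = c$.

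For the reverse inequality, I would take an arbitrary $\martingale{}' \in \setofsupmartb$ with $\liminf \martingale{}' \geq_{x_{1:n}} h$ and show $\martingale{}'(x_{1:n}) \geq c$. For each $x_{n+1} \in \statespace{}$, Lemma~\ref{lemma1} applied at the situation $x_{1:n+1}$ combined with the $(n{+}1)$-measurability of $h$ yields $\martingale{}'(x_{1:n+1}) \geq \inf_{\omega \in \Gamma(x_{1:n+1})} \liminf \martingale{}'(\omega) \geq h(x_{1:n+1}) = g(x_{n+1})$, so as a local gamble on $X_{n+1}$, $\martingale{}'(x_{1:n} X_{n+1}) \geq g$. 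The local supermartingale condition at $x_{1:n}$ together with constant additivity \ref{coherence 6} reads $\lupprev{}(\martingale{}'(x_{1:n} X_{n+1}) \vert x_{1:n}) \leq \martingale{}'(x_{1:n})$, and monotonicity \ref{coherence 4} of $\lupprev{}(\cdot \vert x_{1:n})$ gives $\lupprev{}(\martingale{}'(x_{1:n} X_{n+1}) \vert x_{1:n}) \geq \lupprev{}(g \vert x_{1:n}) = c$. Chaining these yields $\martingale{}'(x_{1:n}) \geq c$, and taking the infimum over all admissible $\martingale{}'$ completes this direction.

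The main subtlety I expect is the construction in the first direction: one has to extend $\martingale{}$ consistently off the distinguished branch $\Box, x_1, \ldots, x_{1:n}$ without breaking any local supermartingale check. Freezing $\martingale{}$ at the constant value $c$ on every situation that is on, or incomparable with, that branch, and letting it jump exactly at $x_{1:n}$ to the $(n{+}1)$-measurable value of $h$, is precisely what reduces the only nontrivial local check to a single application of constant additivity.
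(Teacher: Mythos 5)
Your proof is correct. Note that the paper itself does not prove this lemma --- it imports it verbatim as \cite[Corollary 3]{DECOOMAN201618} --- so there is no in-paper argument to compare against; what you have written is a sound, self-contained derivation within the paper's own framework. Both directions check out: the witness supermartingale that is frozen at $c \coloneqq \lupprev(h(x_{1:n}X_{n+1})\vert x_{1:n})$ everywhere except on the strict descendants of $x_{1:n}$ (where it equals the $(n{+}1)$-measurable value of $h$) is indeed bounded and passes every local check, with the only nontrivial one at $x_{1:n}$ reducing to constant additivity [\ref{coherence 6}]; and the converse inequality correctly combines Lemma~\ref{lemma1} at each child $x_{1:n+1}$ with the defining supermartingale inequality at $x_{1:n}$ and monotonicity [\ref{coherence 4}] of the local model.
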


\begin{theorem}[\protect{\cite[Theorem 28]{8535240}}, Law of Iterated Expectations]\label{theoremIteratedExpectations}
Consider any extended real variable $g \in \setofextvariables$ and, for all $n \in \natz{}$, the $n$-measurable extended real variable $\upprevvovk(g \vert X_{1:n})$ whose value in $x_{1:n} \in \statespace{}_{1:n}$ is given by $\upprevvovk(g \vert x_{1:n})$.
Then for any $k,\ell \in \natz$ such that $k \leq \ell$, it holds that
\begin{align*}
\upprevvovk(g \vert X_{1:k}) = \upprevvovk(\upprevvovk(g \vert X_{1:\ell}) \vert X_{1:k}).
\end{align*}
\end{theorem}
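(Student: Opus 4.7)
\medskip
\noindent
\textbf{Proof proposal.}
Since $\upprevvovk(g\mid X_{1:k})$ is $k$-measurable and $\upprevvovk(\upprevvovk(g\mid X_{1:\ell})\mid X_{1:k})$ is also $k$-measurable, the plan is to fix an arbitrary $x_{1:k}\in\statespaceseq{1}{k}$ and show both inequalities between $\upprevvovk(g\mid x_{1:k})$ and $\upprevvovk(\upprevvovk(g\mid X_{1:\ell})\mid x_{1:k})$; since $k\leq\ell$, the variable $\upprevvovk(g\mid X_{1:\ell})$ is constant on each cylinder $\Gamma(x_{1:\ell})$ with $x_{1:k}\sqsubseteq x_{1:\ell}$, a fact we will use repeatedly.

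For the direction $\upprevvovk(g\mid x_{1:k})\geq\upprevvovk(\upprevvovk(g\mid X_{1:\ell})\mid x_{1:k})$, I would pick any $r>\upprevvovk(g\mid x_{1:k})$ and any bounded below supermartingale $\martingale$ with $\martingale(x_{1:k})<r$ and $\liminf\martingale\geq_{x_{1:k}} g$. Since $\Gamma(x_{1:\ell})\subseteq\Gamma(x_{1:k})$ for each $x_{1:\ell}\sqsupseteq x_{1:k}$, the very same $\martingale$ is a bounded below supermartingale witnessing $\liminf\martingale\geq_{x_{1:\ell}} g$, so by definition $\martingale(x_{1:\ell})\geq\upprevvovk(g\mid x_{1:\ell})$. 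Then I would define the ``freeze'' $\martingale'(x_{1:n})\coloneqq\martingale(x_{1:\min\{n,\ell\}})$; a direct check shows $\Delta\martingale'=\Delta\martingale$ for $n<\ell$ and $\Delta\martingale'=0$ for $n\geq\ell$, so $\martingale'$ is a bounded below supermartingale, while $\liminf\martingale'(\omega)=\martingale(\omega^{\ell})\geq\upprevvovk(g\mid\omega^{\ell})=\upprevvovk(g\mid X_{1:\ell})(\omega)$ for every $\omega\in\Gamma(x_{1:k})$. Hence $\martingale'$ is a candidate in the definition of $\upprevvovk(\upprevvovk(g\mid X_{1:\ell})\mid x_{1:k})$ with $\martingale'(x_{1:k})=\martingale(x_{1:k})<r$, and letting $r\downarrow\upprevvovk(g\mid x_{1:k})$ closes this half.

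For the reverse inequality I would argue by gluing. Take $r>\upprevvovk(\upprevvovk(g\mid X_{1:\ell})\mid x_{1:k})$ and a bounded below supermartingale $\martingale_0$ with $\martingale_0(x_{1:k})<r$ and $\liminf\martingale_0\geq_{x_{1:k}}\upprevvovk(g\mid X_{1:\ell})$. Lemma~\ref{lemma1}, applied via the subprocess rooted at any $x_{1:\ell}\sqsupseteq x_{1:k}$, then gives $\martingale_0(x_{1:\ell})\geq\upprevvovk(g\mid x_{1:\ell})$. Fix $\epsilon>0$; for each of the \emph{finitely many} such $x_{1:\ell}$ (here finiteness of $\statespace$ is essential) choose a bounded below supermartingale $\martingale_{x_{1:\ell}}$ with $\liminf\martingale_{x_{1:\ell}}\geq_{x_{1:\ell}} g$ and $\martingale_{x_{1:\ell}}(x_{1:\ell})<\upprevvovk(g\mid x_{1:\ell})+\epsilon\leq\martingale_0(x_{1:\ell})+\epsilon$, and define the glued process
\begin{equation*}
\martingale(x_{1:n})\coloneqq
\begin{cases}
\martingale_0(x_{1:n}) & \text{if }n\leq\ell,\\
\martingale_{x_{1:\ell}}(x_{1:n})+\martingale_0(x_{1:\ell})-\martingale_{x_{1:\ell}}(x_{1:\ell}) & \text{if }n>\ell,\ x_{1:k}\sqsubseteq x_{1:n},\\
\martingale_0(x_{1:n}) & \text{otherwise.}
\end{cases}
\end{equation*}
A direct computation of the process differences, using constant additivity at each gluing point, shows that $\martingale$ is a bounded below supermartingale (the additive constants $\martingale_0(x_{1:\ell})-\martingale_{x_{1:\ell}}(x_{1:\ell})>-\epsilon$ are bounded uniformly in the finitely many relevant $x_{1:\ell}$), and that $\liminf\martingale\geq_{x_{1:k}} g-\epsilon$. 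Then $\martingale+\epsilon$ is a candidate in the definition of $\upprevvovk(g\mid x_{1:k})$, giving $\upprevvovk(g\mid x_{1:k})\leq\martingale_0(x_{1:k})+\epsilon<r+\epsilon$; letting $\epsilon\downarrow 0$ and $r\downarrow\upprevvovk(\upprevvovk(g\mid X_{1:\ell})\mid x_{1:k})$ yields the claim.

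The main obstacle is handling the extended real nature of $g$: if $\upprevvovk(g\mid x_{1:\ell})=+\infty$ for some $x_{1:\ell}\sqsupseteq x_{1:k}$, then no bounded below real supermartingale can witness the outer expectation (by Lemma~\ref{lemma1} applied at $x_{1:\ell}$), so $\upprevvovk(\upprevvovk(g\mid X_{1:\ell})\mid x_{1:k})=+\infty$ and the inequality is trivial; the case $\upprevvovk(g\mid x_{1:\ell})=-\infty$ requires one to replace the $\epsilon$-approximation by a diverging sequence, but the gluing construction still goes through since the inner supermartingales are only added with an additive correction bounded uniformly in the finitely many branches.
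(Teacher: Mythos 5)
The paper does not actually prove this statement: it is imported verbatim from \cite[Theorem~28]{8535240}, so there is no in-paper argument to compare yours against. Judged on its own, your proof is correct and is the natural first-principles argument. The ``freeze at time $\ell$'' construction indeed turns any witness for $\upprevvovk(g\mid x_{1:k})$ into one for $\upprevvovk(\upprevvovk(g\mid X_{1:\ell})\mid x_{1:k})$ with the same value at $x_{1:k}$, and the gluing over the finitely many $x_{1:\ell}\sqsupseteq x_{1:k}$, with additive corrections bounded below by $-\epsilon$, correctly yields a bounded below \emph{real} supermartingale --- which matters here, since this paper, unlike Shafer and Vovk, insists on real-valued supermartingales. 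Your use of Lemma~\ref{lemma1} to dispose of the case $\upprevvovk(g\mid x_{1:\ell})=+\infty$ is also right: any real $\martingale_0$ dominating the inner variable would have to satisfy $\martingale_0(x_{1:\ell})\geq\inf_{\omega\in\Gamma(x_{1:\ell})}\liminf\martingale_0(\omega)=+\infty$, so the outer infimum runs over the empty set and equals $+\infty$. The only blemish is your closing remark about $\upprevvovk(g\mid x_{1:\ell})=-\infty$: no ``diverging sequence'' is needed there; one simply picks any admissible $\martingale_{x_{1:\ell}}$ with $\martingale_{x_{1:\ell}}(x_{1:\ell})\leq\martingale_0(x_{1:\ell})+\epsilon$, which exists a fortiori, and the rest of the gluing goes through unchanged. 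This does not affect correctness.
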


\begin{lemma}\label{lemmaMonotoneConvergence}
Consider any extended real variable $f \in \setofextvariables$ and situation~$s \in \situations$. 
If $\upprevvovk(f \vert s) < +\infty$, then $\upprevvovk(f \vert t) < +\infty$ for all $t \sqsupseteq s$.
\end{lemma}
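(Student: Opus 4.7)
The plan is to apply Definition~\ref{def:upperexpectation} directly and observe that any supermartingale witnessing finiteness at $s$ automatically witnesses finiteness at any situation $t \sqsupseteq s$. The assumption $\upprevvovk(f \vert s) < +\infty$ means that the infimum in~\eqref{upprev2} is finite, which---since an infimum over the empty set equals $+\infty$---implies there exists some $\martingale \in \setofsupmartb$ with $\liminf \martingale(\omega) \geq f(\omega)$ for all $\omega \in \Gamma(s)$, and with $\martingale(s) \in \reals$ (the latter being automatic, since elements of $\setofsupmartb$ are real processes by definition).

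Fix any such witness $\martingale$. For the given $t \sqsupseteq s$, I would then argue that this same $\martingale$ is a valid witness in the infimum defining $\upprevvovk(f \vert t)$. Indeed, the class $\setofsupmartb$ is defined globally on the entire event tree, so $\martingale$ remains a bounded below supermartingale without any reference to a particular situation; and since $t \sqsupseteq s$ yields $\Gamma(t) \subseteq \Gamma(s)$, the dominance $\liminf \martingale(\omega) \geq f(\omega)$ transfers from $\omega \in \Gamma(s)$ to $\omega \in \Gamma(t)$. Because $\martingale$ is real-valued, $\martingale(t)$ is a real number, so
\[
\upprevvovk(f \vert t) \leq \martingale(t) < +\infty,
\]
which is exactly the desired conclusion.

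There is essentially no serious obstacle here: the lemma is an almost immediate consequence of the global, situation-independent definition of $\setofsupmartb$ combined with the inclusion $\Gamma(t) \subseteq \Gamma(s)$. The only subtlety worth flagging is to keep clearly separate the values of supermartingales in $\setofsupmartb$ (which are \emph{real}, and in particular finite, by the convention adopted in this paper) from the upper expectation $\upprevvovk$ itself (which may be extended real-valued); it is precisely the real-valuedness of $\martingale(t)$ that yields finiteness of $\upprevvovk(f \vert t)$ for free, with no need to invoke any of the heavier machinery (L\'evy's zero--one law, iterated expectations, etc.) developed elsewhere in the appendix.
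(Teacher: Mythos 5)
Your proof is correct and follows essentially the same route as the paper's: take a bounded below supermartingale $\martingale$ witnessing $\upprevvovk(f\vert s)<+\infty$, note that the dominance condition transfers to $\Gamma(t)\subseteq\Gamma(s)$, and conclude $\upprevvovk(f\vert t)\leq\martingale(t)<+\infty$ from the real-valuedness of $\martingale$. No further comment is needed.
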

\begin{proof}
Fix any $t \sqsupseteq s$.
If $\upprevvovk(f \vert s) < +\infty$, then there is some bounded below supermartingale $\martingale$ such that $\liminf \martingale \geq_s f$. 
But then also $\martingale(t) < +\infty$ and $\liminf \martingale \geq_t f$. 
Hence, $\upprevvovk(f \vert t) \leq \martingale(t) < +\infty$. 
\qed
\end{proof}

\begin{lemma}\label{lemmaMonotoneConvergenceSAS}
Consider any extended real gamble $f \in \setofextvariables$, then
\begin{align*}
	\upprevvovk(f) &= \inf \Big\{ \martingale(\Box) :  \martingale \in \setofsupmartb \text{ and } \liminf \martingale \geq f \text{ s.a.s.} \Big\}.
\end{align*}
\end{lemma}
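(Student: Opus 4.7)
The plan is to prove the non-trivial inequality $\upprevvovk(f) \leq \inf\{\martingale(\Box) : \martingale \in \setofsupmartb, \liminf \martingale \geq f \text{ s.a.s.}\}$, since the reverse inequality is immediate from Definition~\ref{def:upperexpectation}: any $\martingale$ with $\liminf \martingale \geq f$ everywhere also satisfies the s.a.s. condition (take the constant test supermartingale $1$ as witness).

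For the non-trivial direction, I would fix an arbitrary $\martingale \in \setofsupmartb$ such that $\liminf \martingale \geq f$ s.a.s. By definition of ``strictly almost sure'', there exists a test supermartingale $\mathscr{T}$ (non-negative, with $\mathscr{T}(\Box) = 1$) that converges to $+\infty$ on the complement of $A \coloneqq \{\omega \in \samplespace : \liminf \martingale(\omega) \geq f(\omega)\}$. For any $\epsilon > 0$, I would then consider the perturbed real process $\martingale_\epsilon \coloneqq \martingale + \epsilon \mathscr{T}$.

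The key verification has three ingredients. First, $\martingale_\epsilon$ is a bounded below supermartingale: since $\mathscr{T} \geq 0$ and $\martingale$ is bounded below, so is $\martingale_\epsilon$; and the supermartingale property is preserved, because for any $s \in \situations$, by subadditivity \ref{coherence 2} and non-negative homogeneity \ref{coherence 3} of $\lupprev(\cdot \vert s)$,
\begin{equation*}
\lupprev(\Delta \martingale_\epsilon(s) \vert s) \leq \lupprev(\Delta \martingale(s) \vert s) + \epsilon \lupprev(\Delta \mathscr{T}(s) \vert s) \leq 0.
\end{equation*}
Second, $\liminf \martingale_\epsilon \geq f$ \emph{everywhere}: for $\omega \in A$, using $\mathscr{T} \geq 0$, one gets $\liminf \martingale_\epsilon(\omega) \geq \liminf \martingale(\omega) \geq f(\omega)$; for $\omega \notin A$, $\mathscr{T}_n(\omega) \to +\infty$ and $\martingale$ is bounded below, so $\liminf \martingale_\epsilon(\omega) = +\infty \geq f(\omega)$. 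Third, $\martingale_\epsilon(\Box) = \martingale(\Box) + \epsilon$.

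By Definition~\ref{def:upperexpectation}, these three facts yield $\upprevvovk(f) \leq \martingale_\epsilon(\Box) = \martingale(\Box) + \epsilon$. Letting $\epsilon \downarrow 0$ gives $\upprevvovk(f) \leq \martingale(\Box)$, and taking the infimum over all admissible $\martingale$ completes the proof. I do not foresee any serious obstacle: the main conceptual point is simply that a test supermartingale diverging to $+\infty$ on the ``bad'' paths can be added, with arbitrarily small weight, to force the $\liminf$ dominance condition everywhere while only perturbing the initial capital by $\epsilon$; the rest is routine bookkeeping with the coherence axioms.
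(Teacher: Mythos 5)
Your proof is correct and follows essentially the same route as the paper's: both establish the trivial inequality by noting that the pointwise condition implies the s.a.s.\ one, and both obtain the converse by adding $\epsilon$ times the witnessing test supermartingale to force $\liminf$ dominance everywhere at the cost of an $\epsilon$ increase in initial capital. The only cosmetic difference is that the paper parametrises by an $\alpha$ strictly above the infimum rather than working with an arbitrary admissible supermartingale directly; the argument is otherwise identical.
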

\begin{proof}
Since every bounded below supermartingale $\martingale$ that satisfies $\liminf \martingale \geq f$ also satisfies $\liminf \martingale \geq f$ s.a.s., we clearly have 
\begin{equation*}
\upprevvovk(f) \geq \inf \Big\{ \martingale(\Box) \colon  \martingale \in \setofsupmartb \text{ and } \liminf \martingale \geq f \text{ s.a.s.} \Big\},
\end{equation*}
so it remains to prove the other inequality.
Fix any $\alpha \in \reals{}$ such that $\alpha > \inf \big\{\martingale{}(\Box) \colon \martingale \in \setofsupmartb \text{ and } \liminf \martingale \geq f \text{ s.a.s.} \big\}$ and any $\epsilon > 0$.
Then there is some bounded below supermartingale $\martingale_\alpha$ such that $\liminf \martingale_\alpha \geq f \text{ s.a.s.}$ and 
\begin{equation}\label{Eq: lemmaMonotoneConvergenceSAS 1}
\martingale_\alpha(\Box) \leq \alpha.
\end{equation}
Since $\liminf \martingale{}_\alpha \geq f$ s.a.s., there is some test supermartingale $\martingale_\alpha^\ast$ that converges to $+ \infty$ on $A \coloneqq \{ \omega \in \samplespace \colon \liminf \martingale_\alpha(\omega) < f(\omega)\}$.
Consider the real process $\martingale_\alpha + \epsilon \martingale_\alpha^\ast$. 
This process is again a supermartingale because of coherence, and it is bounded below because $\martingale_\alpha$ is bounded below and $\epsilon \martingale_\alpha^\ast$ is non-negative. 
Since $\martingale_\alpha^\ast$ converges to $+ \infty$ on $A$ and because $\martingale_\alpha$ is bounded below, we have $\liminf (\martingale_\alpha + \epsilon \martingale_\alpha^\ast)(\omega) = +\infty \geq f(\omega)$ for all $\omega \in A$. 
Moreover, for all $\omega \in \samplespace{} \setminus A$, we also have that $\liminf (\martingale_\alpha + \epsilon \martingale_\alpha^\ast)(\omega) \geq f(\omega)$, because $\liminf \martingale{}_\alpha(\omega) \geq f(\omega)$ and because $\epsilon \martingale_\alpha^\ast$ is non-negative.
Hence $\liminf (\martingale_\alpha + \epsilon \martingale_\alpha^\ast) \geq f$, and consequently $\upprevvovk(f) \leq (\martingale_\alpha + \epsilon \martingale_\alpha^\ast)(\Box)$.
It therefore follows from \eqref{Eq: lemmaMonotoneConvergenceSAS 1} that
\begin{align*}
\upprevvovk(f)
&\leq (\martingale_\alpha + \epsilon \martingale_\alpha^\ast)(\Box) 
= \martingale_\alpha(\Box) + \epsilon \leq \alpha + \epsilon.
\end{align*}
As this holds for any $\epsilon \in \posreals{}$, we have that $\upprevvovk(f) \leq \alpha$, and since this is true for every $\alpha \in \reals{}$ such that $\alpha > \inf \big\{\martingale{}(\Box) \colon \martingale \in \setofsupmartb \text{ and } \liminf \martingale \geq f \text{ s.a.s.} \big\}$, it follows that
\begin{align*}
\upprevvovk(f)
\leq \inf \Big\{ \martingale(\Box) :  \martingale \in \setofsupmartb \text{ and } \liminf \martingale \geq f \text{ s.a.s.} \Big\}. \tag*{\qed}
\end{align*}
\end{proof}



\begin{proposition}[Continuity for Non-Decreasing Sequences of Gambles]\label{proposition: bounded upward convergence game}
Consider an extended real variable $f \in \setofextvariables$  and a non-decreasing sequence $\{f_n\}_{n \in \natz}$ of gambles on $\samplespace{}$ that converges pointwise to $f$. 
If moreover $\upprevvovk(f) < +\infty$, then
\begin{equation*}
	\upprevvovk(f) = \lim_{n \to +\infty} \upprevvovk(f_n).
\end{equation*} 
\end{proposition}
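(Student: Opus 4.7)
The plan is to reduce the proposition to the existence of a single bounded-below supermartingale witnessing $\upprevvovk(f) \le L := \lim_n \upprevvovk(f_n)$, built out of the conditional operators $\upprevvovk(f_n \vert \cdot)$ via the law of iterated expectations and the strengthened L\'evy's zero-one law (Theorem~\ref{Bounded Levy}). The easy inequality $L \le \upprevvovk(f)$ follows immediately from $f_n \le f$ and \ref{coherence 4}*. Before starting, I would reduce to the case $f_n, f \ge 0$ by translating everything by $-\inf f_0$ (a real number, since $f_0$ is a gamble), invoking \ref{coherence 6}* and noting that $\{f_n\}$ is non-decreasing so $\inf f_0$ is a uniform lower bound; both sides of the claimed identity shift by the same constant.

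Next I would introduce the candidate supermartingale
\begin{equation*}
\martingale(s) \coloneqq \sup_{n \in \natz} \upprevvovk(f_n \vert s) = \lim_{n \to +\infty} \upprevvovk(f_n \vert s), \quad s \in \situations.
\end{equation*}
Since $f_n \le f$, \ref{coherence 4}* and Lemma~\ref{lemmaMonotoneConvergence} give $\martingale(s) \le \upprevvovk(f \vert s) < +\infty$, and the reduction above yields $\martingale \ge 0$; in particular $\martingale(\Box) = L$ and $\martingale$ is a bounded below real process. To show it is a supermartingale, I fix a situation $s$ and observe that $\martingale(sX)$ is the pointwise, hence uniform (the state space $\statespace$ being finite), non-decreasing limit of the local gambles $\upprevvovk(f_n \vert sX)$. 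Property \ref{coherence 7} then yields $\lupprev(\martingale(sX)\vert s)=\lim_n \lupprev(\upprevvovk(f_n \vert sX) \vert s)$, and combining Lemma~\ref{lemma2} with the law of iterated expectations (Theorem~\ref{theoremIteratedExpectations}) rewrites each term on the right as $\upprevvovk(f_n \vert s)$. Passing to the limit gives $\lupprev(\martingale(sX)\vert s)=\martingale(s)$, so $\lupprev(\Delta \martingale(s)\vert s)=0$ by \ref{coherence 6}.

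The main obstacle, and the heart of the argument, is showing that $\liminf \martingale \ge f$ strictly almost surely, so that Lemma~\ref{lemmaMonotoneConvergenceSAS} can convert $\martingale(\Box)=L$ into $\upprevvovk(f) \le L$. Fix any $\alpha > 1$ and apply Theorem~\ref{Bounded Levy} to each gamble $f_n$: this yields a positive test supermartingale $\mathscr{T}^n$ with $\mu \mathscr{T}^n(s) \le \alpha$ for every $s$, diverging to $+\infty$ on $B_n \coloneqq \{\omega : \liminf_k \upprevvovk(f_n\vert\omega^k) < f_n(\omega)\}$. The uniform bound on $\mu \mathscr{T}^n$ is precisely what lets me invoke Lemma~\ref{lemma: Convex sum} on the convex combination $\mathscr{T} \coloneqq \sum_n 2^{-n}\mathscr{T}^n$ to get a single test supermartingale that diverges to $+\infty$ on $\bigcup_n B_n$. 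Hence $A \coloneqq \bigcap_n (\samplespace \setminus B_n)$ is s.a.s. On any $\omega \in A$, for every $n$ we have $\liminf_k \upprevvovk(f_n \vert \omega^k) \ge f_n(\omega)$, and since $\martingale(\omega^k) \ge \upprevvovk(f_n \vert \omega^k)$ we obtain $\liminf_k \martingale(\omega^k) \ge f_n(\omega)$ for every $n$, whence $\liminf_k \martingale(\omega^k) \ge \sup_n f_n(\omega) = f(\omega)$.

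Putting everything together: $\martingale \in \setofsupmartb$, $\liminf \martingale \ge f$ s.a.s., and $\martingale(\Box) = L$. Lemma~\ref{lemmaMonotoneConvergenceSAS} then gives $\upprevvovk(f) \le L$, which combined with the opposite inequality noted at the outset yields $\upprevvovk(f) = \lim_{n\to+\infty} \upprevvovk(f_n)$. The delicate point deserving care is ensuring the whole construction stays inside the real (not extended) supermartingale framework: this is why the reduction to $f_n \ge 0$, the finiteness of $\martingale$ through Lemma~\ref{lemmaMonotoneConvergence}, and the uniform-in-$s$ bound on $\mu\mathscr{T}^n$ supplied by Theorem~\ref{Bounded Levy} all matter.
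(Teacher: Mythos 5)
Your proposal is correct and follows essentially the same route as the paper's proof: define the process $s\mapsto\lim_n\upprevvovk(f_n\vert s)$, verify it is a bounded below supermartingale via Lemma~\ref{lemma2}, Theorem~\ref{theoremIteratedExpectations} and \ref{coherence 7}, establish $\liminf\geq f$ strictly almost surely by combining the L\'evy test supermartingales through Lemma~\ref{lemma: Convex sum}, and conclude with Lemma~\ref{lemmaMonotoneConvergenceSAS}. The only cosmetic difference is that you argue the s.a.s.\ domination directly ($\liminf_k\martingale(\omega^k)\geq f_n(\omega)$ for each $n$, then take suprema) where the paper argues by contraposition via the $\sup\text{--}\liminf$ interchange inequality; both are valid.
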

\begin{proof}
As the sequence $\{f_n\}_{n \in \natz}$ is non-decreasing and $f_0$ is a gamble and therefore bounded, there is an $M \in \reals{}$ such that $f_n \geq f_0 \geq M$ for all $n \in \natz{}$ and therefore, $f$ is also bounded below by $M$. 
Hence, since $\upprevvovk$ is constant additive [\ref{coherence 6}*], we can assume without loss of generality that $f$ and all $f_n$ are non-negative.

Since $f \geq f_{n+1} \geq f_n$ for all $n \in \natz{}$, it follows from \ref{coherence 4}* and \ref{coherence 5}* that $\upprevvovk{}(f) \geq \upprevvovk{}(f_{n+1}) \geq \upprevvovk{}(f_n) \geq \inf f_n \geq 0$.
Since $\upprevvovk{}(f) < +\infty$, this tells us on the one hand that $\{\upprevvovk{}(f_n)\}_{n \in \natz{}}$ is a non-decreasing sequence of real numbers that is bounded above, so $\lim_{n \to +\infty} \upprevvovk(f_n)$ exists and is real.
On the other hand, this tells us that $\upprevvovk(f) \geq \lim_{n \to +\infty} \upprevvovk(f_n)$. So it only remains to prove the converse inequality.

For any $n \in \natz$, consider the process $S_n$, defined by $S_n(s) \coloneqq \upprevvovk(f_n \vert s)$ for all $s \in \situations$ and the process $S$ defined by the limit $S(s) \coloneqq \lim_{n \to +\infty}S_n(s)$ for all $s \in \situations$.
This limit exists because $\{S_n(s)\}_{n \in \natz}$ is a non-decreasing sequence for all $s \in \situations$, due to the monotonicity [\ref{coherence 4}*] of $\upprevvovk{}$.
As $f_n$ is non-negative for all $n \in \natz{}$, $S_n$ is non-negative for all $n \in \natz{}$ because of \ref{coherence 5}* and therefore $S$ is also non-negative. 
Since we already know that $\upprevvovk{}(f)$ and $\upprevvovk{}(f_n)$ are real, $S$ and $S_n$ are real processes because of Lemma \ref{lemmaMonotoneConvergence}, for all $n \in \natz{}$.
As a result, $S$ and all $S_n$ are non-negative real processes.

It now suffices to prove that $S$ is a bounded below supermartingale such that $\liminf S \geq f$ strictly almost surely because it will then follow from Lemma \ref{lemmaMonotoneConvergenceSAS} that
\begin{align*}
	\upprevvovk(f) 
	&= \inf \Big\{ \martingale(\Box) :  \martingale \in \setofsupmartb \text{ and } \liminf \martingale \geq f \text{ s.a.s.} \Big\} \\
	&\leq S(\Box) = \lim_{n \to +\infty}\upprevvovk(f_n).
\end{align*}
This is what we now set out to do.

$S$ is bounded below because it is non-negative.
We first show that, for any $n \in \natz$, $S_n$ is a supermartingale.
Fix any $x_{1:k} \in \situations{}$ and consider the local gamble $\upprevvovk(f_n \vert x_{1:k} X_{k+1})$ on $\statespace{}$ as defined in Lemma~\ref{lemma2}. 
Then
\begin{align*}
\lupprev(\Delta S_n \vert x_{1:k}) 
&= \lupprev\big(\upprevvovk(f_n \vert x_{1:k} X_{k+1}) - \upprevvovk(f_n \vert x_{1:k}) \ \big\vert \ x_{1:k} \big) \\
&= \lupprev\big(\upprevvovk(f_n \vert x_{1:k} X_{k+1}) \big\vert x_{1:k} \big) - \upprevvovk(f_n \vert x_{1:k}) \\
&= \upprevvovk\big(\upprevvovk(f_n \vert  X_{1:k+1}) \big\vert x_{1:k} \big) - \upprevvovk(f_n \vert x_{1:k}) \\
&= \upprevvovk(f_n \vert x_{1:k}) - \upprevvovk(f_n \vert x_{1:k}) =0 \ \text{ for any $x_{1:k} \in \situations$},
\end{align*}
where the second equality follows from the constant additivity [\ref{coherence 6}] of $\lupprev{}( \cdot \vert x_{1:k})$, the third equality follows from Lemma \ref{lemma2} and the fourth from the Law of Iterated Upper Expectations (Theorem \ref{theoremIteratedExpectations}). 
It follows that $S_n$ is an supermartingale.

We show next that $S$ is also a supermartingale. 
By definition, $\Delta S_n(s)$ converges pointwise to $\Delta S(s)$ for all $s \in \situations$.
Moreover, $\statespace$ is finite and $\Delta S(s)$ is also a gamble on $\statespace{}$.  
Hence, the sequence $\Delta S_n(s)$ converges \emph{uniformly} to $\Delta S(s)$ for all $s \in \situations$. 
Therefore, for all $s \in \situations{}$, it follows from \ref{coherence 7} that $\lim_{n \to +\infty} \lupprev(\Delta S_n(s) \vert s) = \lupprev(\Delta S(s) \vert s)$, which, since $\lupprev(\Delta S_n(s) \vert s) = 0$ for all $n \in \natz$, implies that $\lupprev(\Delta S(s) \vert s) = 0$. 
Hence, $S$ is a supermartingale.

To prove that $\liminf S \geq f$ strictly almost surely, we will use our version of L\'evy's zero-one law.
For all $n \in \natz{}$, since $f_n$ is bounded, it follows from Theorem~\ref{Bounded Levy} that there is a positive test supermartingale $\martingale{}_n$ that converges to $+\infty$ on the event 
\begin{align*} \label{eq: proof upward convergence eq1}
	A_n \coloneqq \Big\{ \omega \in \samplespace{} \colon \liminf_{m \to +\infty} \upprevvovk(f_n \vert \omega^m) &< f_n(\omega) \Big\}.
\end{align*}
Moreover, if we fix some $\alpha > 1$, the positive test supermartingales $\martingale{}_n$ can be chosen such that $\mu\martingale{}_n(s) \leq \alpha$ for all $s \in \situations{}$ and all $n \in \natz$.
Now, consider the real process $\martingale{}$, defined by
\begin{equation*}
	\martingale{}(s) \coloneqq \sum_{n \in \nats{}} \lambda_n \martingale{}_n(s) \text{ for all } s \in \situations{},
\end{equation*}
where the coefficients $\lambda_n > 0$ sum to $1$.
Then, by Lemma~\ref{lemma: Convex sum}, $\martingale{}$ is also a positive test supermartingale, which furthermore clearly converges to $+\infty$ on $\cup_{n \in \natz{}} A_n \eqqcolon A$.
Consider now any path $\omega \in \samplespace{}$ for which $\liminf_{m \to +\infty} S(\omega^m) < f (\omega)$.
Since, as we explained before, $S_n(s)$ is non-decreasing in $n$ for all $x\in\situations{}$, we have that, for all $m\in\natz$, $\sup_{n \in \natz} S_n(\omega^m) = \lim_{n \to +\infty} S_n(\omega^m) = S(\omega^m)$. Since $\liminf_{m \to +\infty} S(\omega^m) < f (\omega)$, this implies that
\begin{align*}
 \liminf_{m \to +\infty} \sup_{n \in \natz} S_n(\omega^m) 
 < \lim_{n \to +\infty} f_n(\omega).
\end{align*}
Since also $\sup_{n \in \natz} \liminf_{m \to +\infty} S_n(\omega^m) \leq \liminf_{m \to +\infty} \sup_{n \in \natz} S_n(\omega^m)$, we infer that
\begin{align}\label{eq: proof upward convergence eq2}
	\sup_{n \in \natz} \liminf_{m \to +\infty} \upprevvovk(f_n \vert \omega^m) &< \lim_{n \to +\infty} f_n(\omega).
\end{align}
Then there is some $n_\omega$ such that
\begin{equation*}
\sup_{n \in \natz} \liminf_{m \to +\infty} \upprevvovk(f_n \vert \omega^m) <  f_{n_\omega}(\omega),
\end{equation*}
and therefore, we see that also
\begin{equation*}
\liminf_{m \to +\infty} \upprevvovk(f_{n_\omega} \vert \omega^m) <  f_{n_\omega}(\omega).
\end{equation*}
So $\omega \in A_{n_\omega} \subseteq A$ and, as a consequence, $\martingale{}$ converges to $+\infty$ on $\omega$.
Hence, the test supermartingale $\martingale{}$ converges to $+\infty$ on all paths $\omega \in \samplespace{}$ such that $\liminf_{m \to +\infty} S(\omega^m) < f (\omega)$, and therefore $\liminf S \geq f$ holds strictly almost surely.
\qed
\end{proof}

Next, we show that Proposition~\ref{proposition: bounded upward convergence game} also holds for sequences of extended real variables that are bounded below.

\begin{proof}[Theorem~\ref{theorem: upward convergence game}]
That $\upprevvovk{}(f) \geq \lim_{n \to +\infty} \upprevvovk{}(f_n)$ follows directly from the monotonicity [\ref{coherence 4}*] of $\upprevvovk{}$.
We prove the converse inequality.
Consider the sequence of gambles $\{f_n^\ast\}_{n \in \natz{}}$ defined by
\begin{align*}
f_n^\ast(\omega) \coloneqq \min \{f_n(\omega),  n \} \text{ for all } \omega \in \samplespace{}.
\end{align*}
Then it is clear that $\{f_n^\ast\}_{n \in \natz{}}$ is also a non-decreasing sequence that converges pointwise to the extended real variable $f$.
Moreover, since $\{f_n\}_{n \in \natz{}}$ is bounded below, $\{f_n^\ast\}_{n \in \natz{}}$ is also bounded below, and, hence, since each $f_n^\ast$ is bounded above by $n$, $\{f_n^\ast\}_{n \in \natz{}}$ is a sequence of gambles. 
Then, by Proposition~\ref{proposition: bounded upward convergence game}, we have that
\begin{equation*}
	\upprevvovk(f) = \lim_{n \to +\infty} \upprevvovk(f_n^\ast),
\end{equation*} 
which, since $f_n^\ast \leq f_n$ and therefore---since $\upprevvovk{}$ is monotone [\ref{coherence 4}*]---also $\upprevvovk(f_n^\ast) \leq \upprevvovk(f_n)$ for all $n \in \natz{}$, implies that 
\begin{equation*}\label{eq: proof of monotone convergence theorem}
	\upprevvovk(f) \leq \lim_{n \to +\infty} \upprevvovk(f_n). \tag*{\qed}
\end{equation*}
\end{proof}

\begin{proof}[Theorem~\ref{Fatou}]
Consider the variable $g_k$ defined by $g_k(\omega) \coloneqq \inf_{n \geq k}{f_n}(\omega)$ for any $k \in \natz$ and all $\omega \in \samplespace$.
Then $f = \lim_{k \to +\infty}{g_k}$. 
Furthermore, $\{g_k\}_{k \in \natz{}}$ is a non-decreasing sequence of extended real variables that is bounded below, because $\{f_n\}_{n \in \natz{}}$ is bounded below.
Hence, we can use Theorem \ref{theorem: upward convergence game} to find that
\begin{equation*}
\upprevvovk(f) = \lim_{k \to +\infty} \upprevvovk(g_k) = \liminf_{k \to +\infty} \upprevvovk(g_k) \leq  \liminf_{k \to +\infty} \upprevvovk(f_k),
\end{equation*}
where the inequality follows because, for all $k \in \natz{}$, $g_k \leq f_k$ and therefore, since $\upprevvovk$ is monotone [\ref{coherence 4}*], also $\upprevvovk{}(g_k) \leq \upprevvovk{}(f_k)$.
\qed
\end{proof}

\section{Proofs of the results in Section \ref{sec: Continuity with respect to cuts}}\label{sec: Proofs of continuity cuts}

\begin{lemma}\label{Lemma: Lemma Bounded Below Gamble}
Consider any $\alpha \in \reals{}$ and any extended real variable $f \in \setofextvariables{}$. 
Moreover, for every $A \in \reals{}$, let $f_A$ be the variable defined by 
\begin{align*}
f_A(\omega) \coloneqq \max\{f(\omega),A\} \text{ for all } \omega \in \samplespace.
\end{align*}
If there is some bounded below supermartingale $\martingale \in \setofsupmartb$ such that $\martingale(\Box) \leq \alpha$ and moreover $\liminf \martingale \geq f$, then there is some $A^\ast \in \reals{}$ such that $\upprevvovk{}(f_A) \leq \alpha$ for all $A \leq A^\ast$.
\end{lemma}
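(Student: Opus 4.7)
The plan is to take $A^\ast$ to be a lower bound of the supermartingale $\martingale$ itself, and then show that $\martingale$ directly witnesses the desired inequality $\upprevvovk(f_A) \leq \alpha$ via Definition~\ref{def:upperexpectation}.

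More concretely, since $\martingale \in \setofsupmartb$, there exists some $M \in \reals{}$ such that $\martingale(s) \geq M$ for all $s \in \situations{}$. I would set $A^\ast \coloneqq M$ and fix any $A \leq A^\ast$. Because the bound $M$ is inherited by every $\martingale_n$, we have $\martingale_n(\omega) \geq M$ pointwise and consequently $\liminf \martingale(\omega) \geq M$ for all $\omega \in \samplespace{}$. Combined with the hypothesis $\liminf \martingale \geq f$, this yields $\liminf \martingale \geq \max\{f, M\} = f_M$.

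Next, the monotonicity of $\max$ in its second argument gives $f_A = \max\{f, A\} \leq \max\{f, M\} = f_M$ pointwise, so $\liminf \martingale \geq f_A$ on all of $\samplespace{}$. Since $\martingale$ is still a bounded below supermartingale with $\martingale(\Box) \leq \alpha$, it is an admissible candidate in the infimum in Definition~\ref{def:upperexpectation} for $f_A$, yielding
\begin{equation*}
\upprevvovk(f_A) \leq \martingale(\Box) \leq \alpha,
\end{equation*}
as required.

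There is no real obstacle here; the lemma essentially records a monotonicity observation, namely that lowering the cut level below the supermartingale's own lower bound cannot make the resulting variable any harder to dominate from above. The only thing one needs to be careful about is to verify that $\liminf \martingale \geq M$ (which is immediate from the pointwise bound on $\martingale_n$) before combining it with $\liminf \martingale \geq f$ to conclude $\liminf \martingale \geq f_M$.
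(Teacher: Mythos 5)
Your proof is correct and follows essentially the same argument as the paper: take $A^\ast$ to be a lower bound of the bounded below supermartingale $\martingale$, deduce $\liminf\martingale \geq f_A$ for all $A \leq A^\ast$, and use $\martingale$ as a witness in Definition~\ref{def:upperexpectation}. The only difference is that you spell out the intermediate step $\liminf\martingale \geq A^\ast$ via the pointwise bound on the $\martingale_n$, which the paper treats as immediate.
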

\begin{proof}
	Since $\martingale$ is bounded below, it immediately follows that there is some $A^\ast \in \reals{}$ such that $\liminf \martingale \geq A^\ast$, and hence also $\liminf \martingale \geq A$ for all $A \leq A^\ast$.
	Fix any such $A \leq A^\ast$.
	Then it is clear that moreover $\liminf \martingale \geq f_A$ and it follows from the definition of $\upprevvovk(f_A)$ that $\upprevvovk(f_A) \leq \martingale(\Box) \leq \alpha$.
	\qed
\end{proof}

\begin{proof}[Theorem~\ref{Lemma Continuity w.r.t. lower cuts}]
$\upprevvovk(f_A)$ is non-decreasing in $A$ because $f_A$ is non-decreasing in $A$ and because $\upprevvovk{}$ is monotone [\ref{coherence 4}*], and therefore $\lim_{A \to -\infty} \upprevvovk(f_A)$ exists.
That $\lim_{A \to -\infty} \upprevvovk(f_A) \geq \upprevvovk(f)$ follows directly from the monotonicity [\ref{coherence 4}*] of $\upprevvovk{}$ and the fact that, for all $A \in \reals{}$, $f \leq f_A$.
It therefore only remains to prove the converse inequality.

If $\upprevvovk(f) = +\infty$, then $\lim_{A \to -\infty} \upprevvovk(f_A) \leq \upprevvovk(f)$ holds trivially.
If $\upprevvovk(f) < +\infty$, fix any $\alpha > \upprevvovk{}(f)$.
Then it follows from the definition of $\upprevvovk(f)$ that there is some bounded below supermartingale $\martingale \in \setofsupmartb$ such that $\martingale(\Box) \leq \alpha$ and $\liminf \martingale \geq f$. 
Lemma \ref{Lemma: Lemma Bounded Below Gamble} now guarantees that there is some $A^\ast \in \reals{}$ such that $\upprevvovk(f_A) \leq \alpha$, for all $A \leq A^\ast$.
Consequently, we also have that $\lim_{A \to -\infty} \upprevvovk(f_A) \leq \alpha$, and since this holds for any $\alpha > \upprevvovk{}(f)$, we conclude that indeed $\lim_{A \to -\infty} \upprevvovk(f_A) \leq \upprevvovk{}(f)$.
\qed
\end{proof}

\begin{proof}[Theorem~\ref{theorem Continuity w.r.t. cuts}]
Since $\upprevvovk{}(f) < +\infty$, there is some bounded below supermartingale $\martingale \in \setofsupmartb$ such that $\liminf \martingale \geq f$.
Of course, $\martingale(\Box)$ is real, and therefore, if we define the extended real variables $f_A$, for all $A \in \reals{}$, by $f_A(\omega) \coloneqq \max\{ f(\omega), A \}$ for all $\omega \in \samplespace{}$, it follows from Lemma~\ref{Lemma: Lemma Bounded Below Gamble} that there is some $A^\ast \in \reals{}$ such that $\upprevvovk{}(f_A) < +\infty$ for all $A \leq A^\ast$.
Consider any such $A \leq A^\ast$ and any non-decreasing sequence of reals $\{B_n\}_{n \in \natz{}}$ such that $A \leq B_0$ and $\lim_{n \to +\infty} B_n = +\infty$.
Then $\{f_{A,B_n}\}_{n \in \natz{}}$ is a non-decreasing sequence of gambles that converges pointwise to the extended real variable $f_A$, for which $\upprevvovk{}(f_A) < +\infty$.
Hence, we can use Proposition~\ref{proposition: bounded upward convergence game} to derive that $\lim_{n \to +\infty} \upprevvovk{}(f_{A,B_n}) = \upprevvovk{}(f_A)$, and therefore also 
\begin{align*}
\lim_{B \to +\infty} \upprevvovk{}(f_{A,B}) = \upprevvovk{}(f_A).
\end{align*}
Since this holds for any $A \in \reals{}$, and because $\lim_{A \to -\infty} \upprevvovk{}(f_A) = \upprevvovk{}(f)$ by Theorem~\ref{Lemma Continuity w.r.t. lower cuts}, we conclude that indeed $\lim_{A \to -\infty} \lim_{B \to +\infty} \upprevvovk{}(f_{A,B}) = \upprevvovk{}(f)$. 
\qed
\end{proof}

 }{}

\end{document}